\title{On the Injective Norm of Sums of Random Tensors and the Moments of Gaussian Chaoses}
\author{Ishaq Aden-Ali\thanks{Department of Electrical Engineering and Computer Science, UC Berkeley. Email: adenali@berkeley.edu}}
\begin{document}
\maketitle
\begin{abstract}
We prove an upper bound on the expected $\ell_p$ injective norm of sums of subgaussian random tensors.
Our proof is simple and does not rely on any explicit geometric or chaining arguments.
Instead, it follows from a simple application of the \emph{PAC-Bayesian lemma}, a tool that has proven effective at controlling the suprema of certain ``smooth'' empirical processes in recent years.
Our bound strictly improves a very recent result of Bandeira, Gopi, Jiang, Lucca, and Rothvoss.
In the Euclidean case ($p=2$), our bound sharpens a result of Lata{\l}a that was central to proving his estimates on the moments of Gaussian chaoses.
As a consequence, we obtain an elementary proof of this fundamental result. 
\end{abstract}
\section{Introduction}
Many problems in probability theory, computer science, and statistics naturally reduce to bounding the norm of a random tensor of the form
\begin{equation}\label{eq:random_tensor_model}
T = \sum_{k=1}^n \xi_k T_k  ,  
\end{equation}
where $( \xi_k )$ is a sequence of independent zero mean subgaussian random variables\footnote{Recall that the random variable $\xi \in \bR$ is subgaussian if  $\E\exp(\lambda(\xi - \E \xi)) \le \exp(\frac{\lambda^2}{2})$ for all $\lambda > 0$. } and each $T_k $ is a deterministic order $r$ tensor living in the tensor space $\mathbb{R}^{d_1} \otimes \dots \otimes \mathbb{R}^{d_r}$. 
We will refer to tensors of the form \eqref{eq:random_tensor_model} as \emph{subgaussian random tensors} throughout.
In this paper, our goal is to understand the typical behaviour of the $\ell_p$ \emph{injective norm} of $T$, which is defined as
\[
\pnorm{T} = \sup_{x_1 \in B_p^{d_1}, \dots,x_r \in B_p^{d_r}} T(x_1, \dots, x_r),
\]
where $B_p^d  \coloneqq \{x \in \mathbb{R}^d : \|x\|_p \le 1\}$ is the $d$-dimensional $\ell_p$ ball.
In the special case of Euclidean ($p=2$) matrices ($r=2$), it is not hard to see that the injective norm corresponds to the usual operator norm.
Our understanding of the typical behaviour of the operator norm of random matrices is quite deep; see the books \cite{tao2012topics,tropp15,Vershynin2016HDP} and references therein. 
Unfortunately, this understanding \emph{does not} extend to higher order tensors.
Indeed, much of the existing machinery developed in the matrix setting fails to generalize, perhaps chiefly due to our inability to extend crucial matrix trace inequalities.
Because of this, we do not have many bounds on the injective norm for general random tensor models like \eqref{eq:random_tensor_model}.
Nonetheless, one can still try and derive such bounds as we do in this paper.

Before we state our result, we first need to introduce some cumbersome tensor notation in order to define the relevant parameters in our bounds.
For a tensor $A \in \tensorspace$ and vectors $x_1 \in \mathbb{R}^{d_1}, \dots, x_{\ell} \in \mathbb{R}^{d_\ell}$ we can define a contraction of $A$ along its first $\ell$ indices using $x_1, \dots, x_\ell$, denoted by  $A(x_1, \dots, x_{\ell}, \cdot, \dots, \cdot)$, as the order $r-\ell$ tensor in $\bR^{d_{\ell+1}} \otimes \dots \otimes \bR^{d_r}$ with entries
\[
A(x_1, \dots, x_{\ell}, \cdot, \dots, \cdot)_{i_1, \dots, i_{r-\ell}} = \sum_{j_1 \in [d_1], \dots, j_\ell \in [d_\ell]} A_{j_1, \dots, j_\ell, i_1, \dots, i_{r-\ell}} \prod_{t = 1}^{\ell} x_{j_t}.
\]
We also need to consider contractions of $A$ via arbitrary indices.
For example, if $r=3$ then the order $2$ tensor $A(\cdot, x_2, \cdot)$ is defined in the obvious way in our context.
Unfortunately, considering arbitrary inputs to contract on for high order tensors requires additional notation to make things readable.
For a (possibly empty) subset $I \subseteq [r]$, let $\pi_I$ be the permutation that maps the elements $t_1 < \dots < t_{|I|} \in I $ to $\{1, \dots, |I|\}$ in the same relative order and maps the elements $t_{|I|+1}< \dots < t_{r} \in I^c$ to $\{|I|+1, \dots, r\}$ in the same relative order.
\footnote{In other words, this is the permutation that shifts down the ``active'' indices in $I$ to $\{1, \dots, |I|\}$ and shifts up the ``inactive'' indices in $I^c$ to $\{|I|+1, \dots, r\}$.}
We write $\pi_I(i)$ to denote the $i$th element of the permutation.
For a tensor $A \in \tensorspace$ and a subset of indices $I \subseteq [r]$, we define the permuted tensor $A^{\pi_I}$ in the space $\bR^{d_{\pi_I(1)}} \otimes \dots \otimes \bR^{d_{\pi_I(r)}}$ with entries $A^{\pi_I}_{i_1, \dots, i_r} = A_{i_{\pi_I(1)}, \dots, i_{\pi_I(r)}}$.
To see how this notation is useful, consider our previous example of an order $3$ tensor $A \in \bR^{d_1} \otimes \bR^{d_2} \otimes \bR^{d_3}$, vectors $x_1 \in \bR^{d_1}, x_2 \in \bR^{d_2}, x_3 \in \bR^{d_3}$, and the set $I = \{ 2 \}$.
It is easy to see that the order $2$ tensor $A^{\pi_I}(x_{\pi_I(1)}, \cdot, \cdot)$ is equivalent to the tensor $A(\cdot, x_{2}, \cdot)$ (up to a permutation).
Finally, we define the \emph{Frobenius norm} of a tensor $A \in \tensorspace$, denoted $\|A\|_{F}$, to be the square root of the sum of squared entries of $A$.
We are now ready to define the variance parameters.
\begin{definition}(Variance parameters.)\label{def:astd} Let $T \in \tensorspace$ be a subgaussian random tensor. For any $p \ge 2$ and $\ell \in \{0,1,\dots, r\}$ we define the variance parameter
\[
\astd{\ell}^2 = \astd{\ell}^2(T) = \sup_{x_1 
\in B_p^{d_1}, \dots, x_r \in B_p^{d_r}}  \sum_{\substack{I \subseteq [r] \\ |I| = r - \ell}}  \sum_{k=1}^n \|T_k^{\pi_I}(x_{\pi_I(1)}, \dots, x_{\pi_I(r-\ell)}, \cdot, \dots, \cdot) \|_{F}^2.
\]
\end{definition}
Our main result is the following clean bound on the expected $\ell_p$ injective norm of subgaussian random tensors in terms of these variance parameters.
\begin{theorem}\label{thm:main}
Let $T \in \tensorspace$ be a subgaussian random tensor.
For any $p \ge 2$ and $\beta > 0$ we have
\begin{equation}
    \E \pnorm{T} \le \sqrt{\left(\sum_{\ell =0}^r \beta^{1-\ell}{\astd{\ell}^2}\right)\sum_{t=1}^r d_{t}^{1-\frac{2}{p}} }.
\end{equation}
\end{theorem}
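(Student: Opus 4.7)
The plan is to apply the PAC-Bayesian lemma (Donsker--Varadhan) to the multilinear process $x \mapsto T(x;\xi)$, with a Gaussian posterior and prior tailored to the multilinear structure of $T$. Fix $\beta > 0$ and a scaling $\lambda > 0$ (to be optimized at the end). For each candidate maximizer $x^{*} = (x_1^{*}, \ldots, x_r^{*}) \in B_p^{d_1}\times\cdots\times B_p^{d_r}$, take the Gaussian posterior $Q_{x^{*}} = \bigotimes_{i=1}^{r} \mathcal{N}(x_i^{*}, \tfrac{1}{\beta} I_{d_i})$ and the prior $P = Q_{0}$. Two clean observations drive the argument. First, writing $y = x^{*} + g$ with $g_i \sim \mathcal{N}(0, \tfrac{1}{\beta} I_{d_i})$ and expanding $T(y;\xi)$ multilinearly, every term that contains at least one factor of some $g_i$ has mean zero, so $\E_{y \sim Q_{x^{*}}} T(y;\xi) = T(x^{*};\xi)$. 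Second, $\mathrm{KL}(Q_{x^{*}} \,\|\, P) = \tfrac{\beta}{2}\|x^{*}\|_2^2$, and since $p \ge 2$ the Hölder bound $\|x_i^{*}\|_2 \le d_i^{1/2-1/p}$ gives $\sup_{x^{*}} \mathrm{KL}(Q_{x^{*}} \,\|\, P) \le \tfrac{\beta}{2}\sum_t d_t^{1-2/p}$.

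Plugging into PAC-Bayes, taking the supremum over $x^{*}$ on the left, applying $\E_\xi$ with Jensen's inequality to pull it inside the logarithm, and using subgaussianity of $\xi$ in the form $\E_\xi \exp(\lambda T(y;\xi)) \le \exp\bigl(\tfrac{\lambda^2}{2}\sum_k T_k(y)^2\bigr)$, I reduce the entire problem to the Gaussian MGF estimate
\[
\log \E_{y \sim P} \exp\left(\tfrac{\lambda^2}{2}\sum_k T_k(y)^2\right) \le \tfrac{\lambda^2}{2}\sum_{\ell=0}^{r} \beta^{-\ell}\,\astd{\ell}^2
\]
for $\lambda$ in a suitable range. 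Once this is in hand, combining with the KL bound and minimizing over $\lambda > 0$ in $\lambda\, \E\pnorm{T} \le \tfrac{\beta}{2}\sum_t d_t^{1-2/p} + \tfrac{\lambda^2}{2}\sum_{\ell=0}^{r}\beta^{-\ell}\astd{\ell}^2$ produces exactly the claimed inequality.

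The main technical obstacle will be the Gaussian MGF bound displayed above. My plan is to integrate out the Gaussians $y_1, \ldots, y_r$ one slot at a time. After integrating $y_1 \sim \mathcal{N}(0, \tfrac{1}{\beta} I_{d_1})$, the quadratic form $\sum_k T_k(y)^2 = y_1^{\top} \Sigma(y_2,\ldots,y_r)\, y_1$ (with $\Sigma = \sum_k T_k(\cdot, y_2,\ldots,y_r) T_k(\cdot, y_2,\ldots,y_r)^{\top}$) produces a factor $\det(I - \tfrac{\lambda^2}{\beta}\Sigma)^{-1/2}$, whose logarithm expands as a power series with leading term $\tfrac{\lambda^2}{2\beta}\mathrm{tr}(\Sigma) = \tfrac{\lambda^2}{2\beta}\sum_k\|T_k(\cdot, y_2,\ldots,y_r)\|_F^2$. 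Iterating this on $y_2, \ldots, y_r$, the chain of leading-order terms yields exactly the $\ell = r$ summand $\tfrac{\lambda^2}{2\beta^r}\astd{r}^2$ of the target; the remaining summands $\astd{\ell}^2$ for $\ell < r$ should arise from the higher-order $-\log\det$ corrections $\tfrac{1}{j}\mathrm{tr}((\tfrac{\lambda^2}{\beta}\Sigma)^{j})$, which via the Gaussian Wick combinatorics reduce, after subsequent integrations, to sums over subsets $I \subseteq [r]$ of squared Frobenius norms of partial contractions of the $T_k$'s --- precisely the structure defining $\astd{\ell}^2$. The delicate part is carrying out this combinatorial bookkeeping cleanly while controlling the operator-norm piece in the standard estimate $-\log\det(I - A) \le \mathrm{tr}(A)/(1 - \|A\|_{\mathrm{op}})$ so that the induction on slots closes for $\lambda$ in the right range.
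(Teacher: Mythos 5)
There is a genuine gap, and it is fatal as stated: the ``Gaussian MGF estimate'' to which you reduce the problem is false because its left-hand side is infinite. For $r \ge 2$ the quantity $\sum_k T_k(y)^2$ is a polynomial of degree $2r \ge 4$ in the Gaussian coordinates of $y \sim P$, and such polynomials have no finite exponential moments: already in the toy case $d_1=d_2=1$, $n=1$, $T_1=1$ one has $\E_{y\sim P} \exp\bigl(\tfrac{\lambda^2}{2} y_1^2 y_2^2\bigr) = \E_{y_2}\bigl(1-\tfrac{\lambda^2}{\beta}y_2^2\bigr)^{-1/2} = +\infty$ for every $\lambda>0$, since the inner Gaussian integral diverges on the positive-probability event $y_2^2 > \beta/\lambda^2$. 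The same obstruction kills your proposed slot-by-slot integration: the determinant bound after integrating $y_1$ requires $\tfrac{\lambda^2}{\beta}\|\Sigma(y_2,\dots,y_r)\|_{\mathrm{op}} < 1$, but $\Sigma$ depends on the unbounded Gaussians $y_2,\dots,y_r$, so this condition fails with positive probability and the induction on slots cannot close for any $\lambda>0$. The error enters at the step where you apply Jensen to pull $\E_\xi$ inside the logarithm \emph{and then} leave the Gaussian smoothing expectation inside the exponential: that order of operations forces you to exponentiate a degree-$2r$ Gaussian polynomial.

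The fix is to keep the posterior (Gaussian) expectation \emph{linear}, i.e.\ outside the exponential, which is exactly how \cref{lem:pacbayes} is stated: the term to control is $\E_{\theta\sim\rho_\theta}\log\bigl(\E_{\xi_k}\exp(f_k(\xi_k,\theta))\bigr)$, so after invoking subgaussianity of $\xi_k$ conditionally on $\theta$ one only needs the \emph{second} moment $\E_{\theta\sim\rho_\theta}\,T_k(\theta)^2$, a finite polynomial moment that is computed exactly by \cref{prop:second_moment_expanision} and yields precisely $\sum_{\ell}\beta^{-\ell}\astd{\ell}^2$. Your other ingredients are correct and match the paper's argument: the identity $\E_{y\sim Q_{x^*}}T(y;\xi)=T(x^*;\xi)$ (multilinearity plus centered noise), the KL computation $\tfrac{\beta}{2}\sum_t\|x_t^*\|_2^2 \le \tfrac{\beta}{2}\sum_t d_t^{1-2/p}$, and the final optimization over $\lambda$. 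Only the treatment of the log-MGF term needs to be rerouted as above.
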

Approximately optimizing the parameter $\beta$ above yields the following corollary.
\begin{corollary}\label{cor:interpretable}
Let $T \in \tensorspace$ be a subgaussian random tensor.
For any $p \ge 2$ we have
\begin{equation}
   \E \pnorm{T} \le \sqrt{\left(\astd{1}^2 + r\max_{2 \le \ell \le r}\astd{\ell}^{\frac{2}{\ell}}\astd{0}^{\frac{2\ell-2}{\ell}}\right)\sum_{t=1}^r d_{t}^{1-\frac{2}{p}}}.
\end{equation}
\end{corollary}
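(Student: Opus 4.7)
The plan is to optimize the free parameter $\beta$ in Theorem \ref{thm:main}. Observe that in the weighted sum $\sum_{\ell=0}^{r}\beta^{1-\ell}\astd{\ell}^2$, only the $\ell = 1$ term is $\beta$-independent, contributing exactly $\astd{1}^2$ regardless of the choice. So I would pull it aside and focus on choosing $\beta$ to control the remaining $r$ terms
$$\beta\,\astd{0}^2 + \sum_{\ell=2}^{r}\beta^{1-\ell}\astd{\ell}^2.$$
The natural idea is to pick $\beta$ so that the $\ell=0$ term dominates every $\ell \ge 2$ term: the inequality $\beta\,\astd{0}^2 \ge \beta^{1-\ell}\astd{\ell}^2$ rearranges to $\beta^{\ell} \ge \astd{\ell}^2/\astd{0}^2$, which suggests setting
$$\beta \;=\; \max_{2 \le \ell \le r}\bigl(\astd{\ell}^2/\astd{0}^2\bigr)^{1/\ell}.$$
(If $\astd{0}=0$, then every $T_k$ vanishes and the corollary is vacuous, so this choice is well-defined.)

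With this $\beta$, by construction $\beta^{1-\ell}\astd{\ell}^2 \le \beta\,\astd{0}^2$ for each $\ell \in \{2,\dots,r\}$. Counting the $\ell=0$ term itself there are exactly $r$ such summands, so
$$\beta\,\astd{0}^2 + \sum_{\ell=2}^{r}\beta^{1-\ell}\astd{\ell}^2 \;\le\; r\,\beta\,\astd{0}^2 \;=\; r\max_{2 \le \ell \le r}\astd{\ell}^{2/\ell}\astd{0}^{(2\ell-2)/\ell},$$
where the last equality simply evaluates $\beta\,\astd{0}^2 = \max_\ell \astd{\ell}^{2/\ell}\astd{0}^{2-2/\ell}$. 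Adding back the isolated $\astd{1}^2$ and substituting into Theorem \ref{thm:main} gives exactly the stated bound.

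There is no real obstacle here: the proof is a one-parameter optimization of the form \emph{minimize} $\beta x + \beta^{-k}y$, solved approximately (up to a factor of $r$) rather than exactly in order to produce a clean closed-form expression over all $\ell$ simultaneously. The only subtlety worth flagging is the boundary case $\astd{0}=0$, dispatched by the parenthetical remark above.
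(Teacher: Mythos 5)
Your proof is correct and follows essentially the same route as the paper: the paper funnels the choice of $\beta$ through \cref{prop:optimization}, whose upper bound is proved by locating (via a relaxation to the maximum of the $r$ summands and the intermediate value theorem) exactly the balancing point $\beta^{\ell^\star} = \astd{\ell^\star}^2/\astd{0}^2$ that you write down in closed form, and then bounding the sum of $r$ terms by $r$ times their common majorant $\beta\astd{0}^2$. The only differences are presentational — you exhibit the optimal $\beta$ explicitly rather than invoking an existence argument, and your handling of the degenerate case is sound, since $\astd{0}=0$ forces every $T_k=0$, and otherwise $\astd{r}^2=\sum_{k}\|T_k\|_F^2>0$ guarantees $\beta>0$.
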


The very recent work of Bandiera, Gopi, Jiang, Lucca, and Rothvoss \cite{bandeira2024tensor} provides a bound on the expected $\ell_p$ injective norm of subgaussian random tensors when each $T_k$ is a symmetric tensor.\footnote{The symmetry assumption is without loss of significant generality as they show that their bounds apply to general square tensors up to a loss in constant factors that only depend on the order $r$.} 
Our result is directly comparable to theirs. \cref{prop:symmetric_variance_comparison} shows that for symmetric tensors the variance parameters we consider can be bounded by scaled versions of the variance parameters appearing in \cite{bandeira2024tensor} (where the scaling only depends on the order $r$).
The following corollary phrased in terms of their variance parameters $(\std{\ell}^2)$ (defined in \cref{sec:variances}) is a direct consequence of this simple observation.
\begin{corollary}\label{cor:sym}
Let $T \in (\mathbb{R}^d)^{\otimes r}$ be a symmetric subgaussian random tensor.
For any $p \ge 2$ we have
\begin{equation}
    d^{\frac{1}{p} - \frac{1}{2}}\E \pnorm{T} \lesssim_{r} \std{1} + \max_{2 \le \ell \le r}\std{\ell}^{\frac{1}{\ell}}\std{0}^{\frac{\ell-1}{\ell}}.
\end{equation}
\end{corollary}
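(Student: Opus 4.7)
The plan is a direct reduction: combine Corollary \ref{cor:interpretable} with the variance-parameter comparison for symmetric tensors (referenced in the paper as \cref{prop:symmetric_variance_comparison}) and simplify. Since $T \in (\mathbb{R}^d)^{\otimes r}$ has all side dimensions equal to $d$, the dimension sum in Corollary \ref{cor:interpretable} collapses to $\sum_{t=1}^r d_t^{1-2/p} = r \cdot d^{1-2/p}$.

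Substituting this into Corollary \ref{cor:interpretable} and moving $\sqrt{d^{1-2/p}}$ to the left-hand side as a factor of $d^{1/p - 1/2}$, I would obtain
\[
d^{1/p - 1/2}\,\E \pnorm{T} \le \sqrt{r}\,\sqrt{\astd{1}^2 + r \max_{2 \le \ell \le r}\astd{\ell}^{2/\ell}\astd{0}^{(2\ell-2)/\ell}}.
\]
Next I would apply \cref{prop:symmetric_variance_comparison} to replace each $\astd{\ell}$ with $\std{\ell}$, incurring only an $r$-dependent multiplicative constant. Finally, splitting the outer square root via the elementary $\sqrt{A+B} \le \sqrt{A}+\sqrt{B}$ (and noting that $\sqrt{\std{\ell}^{2/\ell}\std{0}^{(2\ell-2)/\ell}} = \std{\ell}^{1/\ell}\std{0}^{(\ell-1)/\ell}$), while absorbing all remaining $r$-only constants into the symbol $\lesssim_r$, yields exactly the asserted inequality.

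The above is routine algebra once \cref{prop:symmetric_variance_comparison} is in hand; the real content is in that comparison, which shows that on symmetric tensors the $\astd{\ell}$ control the $\std{\ell}$ of \cite{bandeira2024tensor} up to a factor depending only on $r$. Its proof (presumably leveraging symmetry to identify contractions over different subsets $I \subseteq [r]$ of the same size with a single canonical contraction, so the sum over $|I| = r - \ell$ becomes a binomial factor in $r$) is the only nontrivial ingredient, but I would treat it as a black box here.
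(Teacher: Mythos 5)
Your proposal is correct and is essentially the paper's intended argument: the paper derives \cref{cor:sym} exactly by combining \cref{cor:interpretable} with \cref{prop:symmetric_variance_comparison} (whose proof indeed uses symmetry to collapse the sum over $|I|=r-\ell$ into a binomial factor, as you guessed), together with the routine steps of substituting $\sum_t d_t^{1-2/p}=r\,d^{1-2/p}$ and splitting the square root. No gaps.
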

This bound improves over the main result of~\cite{bandeira2024tensor} by removing a $\log d$ from the first term.
The proof of the main result of \cite{bandeira2024tensor} begins by reformulating the problem as one of controlling the expected supremum of a certain Gaussian process.
A classic result in the theory of Gaussian processes implies that this supremum can be bounded by Dudley's entropy integral (see \cite[Chapter 1]{talagrand2022}).
The main technical contribution of \cite{bandeira2024tensor} was deriving sufficiently sharp estimates on the relevant covering numbers which required understanding the underlying (complicated) geometry of the Gaussian process.

In this work, we completely avoid using any explicit geometric or chaining arguments.
Instead, we control the relevant (sub)gaussian process by appealing to the so-called \emph{PAC-Bayesian lemma}.
At a high level, the PAC-Bayesian lemma provides a way to control the supremum of certain ``smoothed" processes, including the multilinear processes we study in this paper.
We defer a formal statement and discussion of this lemma to~\cref{sec:pac_bayes}.

Taking a step back, we should ask ourselves: How sharp are these bounds? 
A natural setting in which to consider this question is that of Euclidean matrices.
It is easy to see that \cref{thm:main} implies the bound
\[
\E\|T\|_{\mathrm{op}} \lesssim \max\left\{\left\|\sum_{k=1}^n T_k^{\top}T_k\right\|_{\mathrm{op}}^{\frac{1}{2}},\left\|\sum_{k=1}^n T_kT_k^{\top}\right\|_{\mathrm{op}}^{\frac{1}{2}}\right\} + \left(\sup_{x \in B_2^{d_1}, y \in B_2^{d_2}}\sum_{k=1}^n (x^{\top} T_k y)^2\right)^{\frac{1}{4}}\left(\sum_{k=1}^n \|T_k\|_F^2\right)^{\frac{1}{4}},
\]
where $\|\cdot\|_{\mathrm{op}}$ denotes the operator norm of a matrix.
The first term on the right-hand side of the above is precisely the term that appears in the (non-commutative) matrix Khintchine inequality \cite{LustPiquard1986, LustPiquardPisier1991}, but without the usual $\sqrt{\log d}$ factor that is necessary in general.
There is no contradiction, however, since the sum of squared Frobenius norms in the second term generally yields \emph{much larger} upper bounds than the matrix Khintchine inequality in regimes where the multiplicative $\sqrt{\log d}$ factor is necessary.
For random tensors of order $r \ge 3$, the very recent work of Boedihardjo~\cite{boedihardjo2024injective} provides a bound on the expected $\ell_2$ injective norm of Gaussian random tensors in the independent entry model which is a special case of \eqref{eq:random_tensor_model}.
His bound is sharper than the bound implied by \cref{thm:main} in a wide range of settings, analogous to the suboptimal regimes previously mentioned in the matrix case.

While it is unfortunate that our bounds do not recover the matrix Khintchine inequality, this is somewhat expected since, as we will see, our proof is agnostic to the underlying matrix structure.
Loosely speaking, the proof of our main result is ``volumetric'' in nature.
We should briefly remark that in the Euclidean matrix setting, there is a way to use a very different kind of PAC-Bayesian argument that utilizes the underlying matrix structure \cite{adenali202X}. 
This approach ever-so-slightly generalizes Tropp's celebrated matrix moment generating function (MGF) approach \cite{tropp2012user,tropp15} and is able to recover the matrix Khintchine inequality (among other things). 
Still, our \emph{dimension-free} $\ell_2$ injective norm bounds for $r \ge 3$ are useful in applications where the Frobenius norm terms are (1) not too large (e.g.\ due to sparsity) or are (2) inherently necessary due to application-specific considerations.
A few applications of the form (1) are investigated in \cite{bandeira2024tensor}.
An application of the form (2) that we will consider is that of estimating the moments of \emph{Gaussian chaoses}.
\subsection{Moments of Gaussian Chaoses}
An order $r$ (decoupled) Gaussian chaos is a random variable of the following form:
\[
\sum_{i_1, \dots, i_r = 1} ^d A_{i_1, \dots, i_r}\prod_{t=1}^r (g_t)_{i_t}
\]
where $A \in (\bR^d)^{\otimes r}$ is a deterministic tensor and $g_1, \dots, g_r \in \bR^d$ are independent standard Gaussian random vectors. 
Recall that the $p$th moment of a random variable $X \in \mathbb{R}$ is defined as $\|X\|_p = (\E |X|^p)^{\frac{1}{p}}$.
A fundamental result in probability theory due to Lata{\l}a~\cite{latala2006estimates} is a sharp upper bound on the moments of decoupled Gaussian chaoses of all orders.\footnote{When the tensor $A$ contains certain symmetries, a classic decoupling result \cite{delaPena1995} allows one to apply Lata{\l}a's result to \emph{coupled} Gaussian Choases $\sum_{i_1, \dots, i_r \in [d]} A_{i_1, \dots, i_r}\prod_{t=1}^r g_{i_t}$ that are defined by a single Gaussian vector $g \in \bR^d$.}
This generalizes a well known result due to Hanson and Wright for the case $r=2$ \cite{HansonWright1971}.
To formally state Lata{\l}a's result, we unfortunately need some more notation.
Let $S(r)$ be the set of all non-empty partitions of $[r] \coloneqq \{1, \dots, r\}$.
Given a tensor $A \in (\bR^d)^{\otimes r}$ and a partition $\mc{P} = \{I_1, \dots, I_m\} \in S(r)$, let $A^{\mc{P}}$ be the tensor $A$ when viewed as an element of the space $\bR^{d^{|I_1|}} \otimes \dots \otimes \bR^{d^{|I_m|}}$ in the natural way: the $j$th index of $A^{\mc{P}}$ is defined by a tuple $(i_t)_{t \in I_j}$ which represents a subset of indices of the original tensor $A \in (\bR^d)^{\otimes r}$.
For example, if $A \in (\bR^d)^{\otimes 3}$ and $\mc{P} = \{ \{1\}, \{2,3\} \}$, the $(i_1, (i_2, i_3))$th entry of $A^{\mc{P}} \in \bR^d \otimes \bR^{d^2}$ is the $(i_1, i_2, i_3)$th entry of $A$.
The following theorem is Lata{\l}a's celebrated result~\cite{latala2006estimates}.
\begin{theorem}\label{thm:moment_gaussian_chaos}
There is a constant $C_r$ that only depends on the order $r$ such that for every $p \ge 2$ we have 
\[
\left\|\sum_{i_1, \dots, i_r \in [d]}A_{i_1, \dots, i_r} \prod_{t=1}^r (g_t)_{i_t}\right\|_p \le C_r \sum_{\mc{P} \in S(r)} p^{\frac{|\mc{P}|}{2}}\normtwo{A^{\mc{P}}}.
\]
\end{theorem}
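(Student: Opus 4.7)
The plan is to induct on the order $r$, invoking \cref{thm:main} at $p=2$ together with Gaussian concentration to drive the inductive step. The base case $r=1$ is immediate: $\sum_i A_i(g_1)_i \sim \mathcal{N}(0,\|A\|_2^2)$, so its $p$-th moment is $\lesssim \sqrt{p}\,\|A\|_2$, matching Lata{\l}a's bound since the only partition in $S(1)$ is $\{\{1\}\}$.

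For the inductive step I condition on $g_1$. Letting $Z$ denote the chaos on the left-hand side, given $g_1$ the variable $Z$ is an order-$(r-1)$ decoupled chaos in $g_2,\ldots,g_r$ with tensor $B := A(g_1,\cdot,\ldots,\cdot)$, so Fubini combined with the inductive hypothesis (applied conditionally) and the triangle inequality for $\|\cdot\|_p$ give
\[
\|Z\|_p \le C_{r-1}\!\sum_{\mc{P}'\in S(\{2,\ldots,r\})}\! p^{|\mc{P}'|/2}\,\bigl\|\,\|B^{\mc{P}'}\|_2\,\bigr\|_p.
\]
The task reduces to bounding $\|Y_{\mc{P}'}\|_p$, where $Y_{\mc{P}'}(g_1) := \|A(g_1,\cdot,\ldots,\cdot)^{\mc{P}'}\|_2$, for each partition $\mc{P}'$ of $\{2,\ldots,r\}$.

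For each $\mc{P}'$, the map $g_1 \mapsto Y_{\mc{P}'}(g_1)$ is a supremum of a linear family in $g_1$ and a direct gradient calculation shows it is Lipschitz with constant $\|A^{\{\{1\}\}\cup\mc{P}'}\|_2$. Gaussian concentration then gives $\|Y_{\mc{P}'}\|_p \le \E Y_{\mc{P}'} + C\sqrt{p}\,\|A^{\{\{1\}\}\cup\mc{P}'}\|_2$. The concentration piece contributes $p^{|\mc{P}'|/2}\sqrt{p}\,\|A^{\{\{1\}\}\cup\mc{P}'}\|_2 = p^{|\mc{P}|/2}\|A^{\mc{P}}\|_2$ with $\mc{P}=\{\{1\}\}\cup\mc{P}'$, providing the Lata{\l}a terms for every partition in which $\{1\}$ is a singleton block. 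The expectation $\E Y_{\mc{P}'}$ is the expected $\ell_2$ injective norm of the subgaussian random tensor $T := \sum_i (g_1)_i\,A(e_i,\cdot,\ldots,\cdot)^{\mc{P}'}$ of order $|\mc{P}'|$, so \cref{thm:main} applied with $p=2$ controls it in terms of the variance parameters $\astd{\ell}^2(T)$. A direct computation identifies each $\astd{\ell}^2(T)$ as a sum of squared injective norms $\|A^{\mc{Q}}\|_2^2$, where each $\mc{Q}$ is the partition of $[r]$ obtained by merging $\{1\}$ with a chosen sub-collection of the blocks of $\mc{P}'$ into one ``big'' block and keeping the remaining blocks of $\mc{P}'$ as separate blocks. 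Taking $\beta = p$ in \cref{thm:main} and using $\sqrt{\sum_i a_i^2}\le\sum_i a_i$ makes each $\|A^{\mc{Q}}\|_2$ appear with the Lata{\l}a coefficient $p^{|\mc{Q}|/2}$.

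The principal obstacle is the combinatorial bookkeeping: verifying that every partition of $[r]$ is realized either as $\{\{1\}\}\cup\mc{P}'$ or as one of the $\mc{Q}$'s produced in the inductive step, and that the total multiplicity of each realization is bounded by a quantity depending only on $r$ (which is absorbed into $C_r$). A secondary technical point is the passage $\sqrt{\sum_i a_i^2}\le\sum_i a_i$ used above, which decouples the single radical from \cref{thm:main} into a sum of individual Lata{\l}a-type contributions and is what allows the induction to close with a clean partition-indexed bound.
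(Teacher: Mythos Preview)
Your proposal is correct and follows essentially the same route as the paper: induct on $r$, condition on one Gaussian coordinate, apply the inductive hypothesis and Minkowski, then control each $\|\,\|B^{\mc{P}'}\|_2\,\|_p$ via Gaussian concentration for the deviation part and \cref{thm:main} with $\beta=p$ for the expectation part, relating the variance parameters to the partition norms $\|A^{\mc{Q}}\|_2$ exactly as in \cref{prop:partition_norm}. The ``bookkeeping obstacle'' you flag is not really an obstacle for the upper bound: you do not need every partition of $[r]$ to be realized, only that each term that appears carries the right power of $p$ and multiplicity $O_r(1)$, and the paper sidesteps the tracking entirely by crudely bounding the sum over the specific $\mc{Q}$'s that arise by the sum over \emph{all} $\mc{Q}\in S(r)$ of the correct cardinality.
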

Lata{\l}a also proved a lower bound that matches the above up to constants that only depend on $r$. 
The centerpiece of Lata{\l}a's proof of \cref{thm:moment_gaussian_chaos} is a bound on the expected $\ell_2$ injective norm of Gaussian random tensors following the model \eqref{eq:random_tensor_model}.
His bound is much larger than the one implied by \cref{thm:main}.
The proof of his estimate is based on bounding the relevant Gaussian process via a somewhat ad-hoc chaining argument that seems to be distinct from both Dudley's integral and Talagrand's celebrated generic chaining \cite{talagrand2022}.
Moreover, his proof is very technical and is quite difficult to follow.
We refer the curious reader to Talagrand's book \cite[Section 15.2]{talagrand2022} for a textbook level treatment of Lata{\l}a's proof for the case $r=3$ that seems to contain all the ideas needed for the general result.

Since our main result recovers Lata{\l}a’s estimate for the expected $\ell_2$ injective norm of Gaussian tensors via a simple argument, we also obtain an elementary proof of~\cref{thm:moment_gaussian_chaos}. 
The hardest part of the proof (and this paper in general) is digesting the tensor notation. 
We include the proof in \cref{sec:moments} for completeness.

\section{Preliminaries}\label{sec:prelim}
We begin by introducing some notation that will be used throughout.
We use $C$ and $C'$ to denote universal constants whose value may change from line to line. 
We use $C_b$ to represent a constant that depends only on the number $b$ and such a $b$ will usually be a relevant parameter of the problem.
We write $f \lesssim g$ if there exists a universal constant $C > 0 $ such that $f \le C g$.
We also write $f \lesssim_{b} g $ if $f \le C_b g$.
For a positive integer $a$ we define the set $[a] = \{1, \dots, a\}$.
We write $x_i$ to denote the $i$th entry of a vector $x \in \bR^d$.
To avoid using superscripts, we will slightly abuse notation by also using subscripts to index a collection of vectors $x_1 \in \bR^{d_1} \dots x_r \in \bR^{d_r}$.
In this case we will write $(x_k)_i$ to denote the $i$th entry of the $k$th vector.
We also generalize this notation to tensors, e.g.\ $(A_k)_{i_1, \dots, i_r}$ denotes the $(i_1, \dots, i_r)$th entry of the $k$th tensor.
We further overload subscripts by denoting the $d \times d$ identity matrix by $I_d$.
We write $\mc{N}(\mu, \Sigma)$ for the Gaussian measure with mean $\mu$ and covariance $\Sigma$.

The tensor space $\bR^{d_1} \otimes \dots \otimes \bR^{d_r}$ consists of all multilinear maps from $\bR^{d_1} \times \dots \times \bR^{d_r}$ to $\bR$ where each $A \in \tensorspace$ is determined by the formula 
\[
A(x_1, \dots, x_r) = \sum_{i_1 \in [d_1], \dots, i_r \in [d_r]} A_{i_1, \dots, i_r}\prod_{t=1}^d (x_t)_{i_t}.
\]

We use $\E$ to denote the expectation of a random variable.
We write $X \sim \mu$ to denote a random variable $X$ distributed according to the probability measure $\mu$.
We will sometimes write $\E_{X \sim \mu}$ to emphasize that $X$ is distributed according to $\mu$. 
We will also sometimes write $\E_{X}$ without specifying the measure $\mu$.
For two probability measures $\mu$ and $\nu$ defined on a (measurable) space $\mc{X}$, we write $\mu \ll \nu$ if $\mu$ is absolutely continuous with respect to $\nu$. For $\mu \ll \nu$, the Kullback-Leibeler divergence between $\mu$ and $\nu$ is defined as $\KL(\mu \| \nu) = \int_{\mc{X}} \log\left(\frac{\mathrm{d}\mu}{\mathrm{d}\nu}\right) \mathrm{d}\mu$.
Given two probability measure $\mu$ and $\nu$ defined over the spaces $\mc{X}$ and $\mc{Y}$, we use $\mu \otimes \nu$ to denote the product measure that they define on the space $\mc{X} \times \mc{Y}$.

\subsection{Relationship with existing variance parameters}\label{sec:variances}
In this subsection we discuss the relationship between our variance parameters and other parameters that have already appeared in the literature.
In \cite{bandeira2024tensor}, the following variance parameters were defined for symmetric subgaussian random tensors $T \in (\bR^d)^{\otimes r}$ for $p \ge 2$ and $\ell \in \{0, 1,\dots, r\}$:
\[
\std{\ell}^2 = \sup_{x_1 \dots, x_{r-\ell} \in B_p^d} \sum_{k=1}^n\|T_k(x_1, \dots, x_{r-\ell}, \cdot, \dots, \cdot)\|_{F}^2.
\]
It is not hard to see that we can relate our variance parameters to these variance parameters.
\begin{proposition}\label{prop:symmetric_variance_comparison}
Let $T \in (\bR^d)^{\otimes r}$ be a symmetric subgaussian random tensor. 
For any $p \ge 2$ and $\ell \in \{0, 1, \dots, r\}$ we have
\[
\astd{\ell}^2 \le \binom{r}{r-\ell}\std{\ell}^2
\]  
\end{proposition}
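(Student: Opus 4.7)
\textbf{Proof plan for Proposition \ref{prop:symmetric_variance_comparison}.} The plan is to peel off the notation and reduce everything to the observation that a symmetric tensor looks the same regardless of which $r-\ell$ indices we choose to contract on. Let me unpack what appears in the definition of $\astd{\ell}^2$. Fix a subset $I = \{i_1 < \dots < i_{r-\ell}\} \subseteq [r]$. By the definition of $T_k^{\pi_I}$ and of contraction, the tensor $T_k^{\pi_I}(x_{\pi_I(1)}, \dots, x_{\pi_I(r-\ell)}, \cdot, \dots, \cdot)$ is (up to a reordering of its free indices) precisely the tensor obtained from $T_k$ by contracting the positions indexed by $I$ against the vectors $x_{i_1}, \dots, x_{i_{r-\ell}}$. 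Since all $d_j$'s coincide and $T_k$ is symmetric, its entries are invariant under permutations of the indices; in particular, contracting along the positions in $I$ yields the same tensor, up to a permutation of the remaining indices, as contracting along the first $r-\ell$ positions. The Frobenius norm is invariant under such index permutations, so
\[
\|T_k^{\pi_I}(x_{\pi_I(1)}, \dots, x_{\pi_I(r-\ell)}, \cdot, \dots, \cdot)\|_F^2 = \|T_k(x_{i_1}, \dots, x_{i_{r-\ell}}, \cdot, \dots, \cdot)\|_F^2.
\]

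Once this identification is made, the rest is bookkeeping. For any fixed $x_1, \dots, x_r \in B_p^d$ and any fixed $I$, the vectors $x_{i_1}, \dots, x_{i_{r-\ell}}$ lie in $B_p^d$, so
\[
\sum_{k=1}^n \|T_k(x_{i_1}, \dots, x_{i_{r-\ell}}, \cdot, \dots, \cdot)\|_F^2 \le \std{\ell}^2
\]
by the definition of $\std{\ell}^2$. Summing over the $\binom{r}{r-\ell}$ subsets $I \subseteq [r]$ of size $r-\ell$ gives an upper bound of $\binom{r}{r-\ell}\std{\ell}^2$ that is uniform in the choice of $x_1, \dots, x_r$, so taking the supremum yields the claim.

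The only potentially subtle step is the symmetry argument tying $T_k^{\pi_I}$ to $T_k$, and even that is not really an obstacle: symmetry of $T_k$ means its entries depend only on the multiset of indices, so any permutation applied to its indices produces the same tensor up to a reordering of axes, and contracting on a prescribed subset of axes against a given list of vectors commutes with such reorderings at the level of Frobenius norms. With that in hand, the bound is immediate from the definitions.
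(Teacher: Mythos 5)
Your proof is correct and follows essentially the same route as the paper's: use symmetry of $T_k$ to identify each contraction along $I$ with a contraction along the first $r-\ell$ positions, bound each of the $\binom{r}{r-\ell}$ inner sums by $\std{\ell}^2$, and conclude. The only cosmetic difference is that you bound each term pointwise before taking the supremum, whereas the paper first interchanges the supremum with the sum over $I$; these are equivalent.
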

\begin{proof}
Using the symmetry of $T_k$ we have
\begin{align*}
\astd{\ell}^2 &\le \sum_{\substack{I \subseteq [r] \\ |I| = r - \ell}} \sup_{x_1, \dots, x_r \in B_p^{d}}  \sum_{k=1}^n  \|T_k^{\pi_I}(x_{\pi_I(1)}, \dots, x_{\pi_I(r-\ell)}, \cdot, \dots, \cdot) \|_{F}^2\\
&= \sum_{\substack{I \subseteq [r] \\ |I| = r - \ell}} \sup_{x_1, \dots, x_{r-\ell} \in B_p^d  } \sum_{k=1}^n  \|T_k(x_{1}, \dots, x_{r-\ell}, \cdot, \dots, \cdot) \|_{F}^2 = \binom{r}{r-\ell} \std{\ell}^2,
\end{align*}
as claimed.
\end{proof}

The relevant variance parameters that are implicit in Lata{\l}a's work are based on the order $r+1$ tensor $A = A(\{T_k\}_{k=1}^n)  \in (\bR^d)^{\otimes r} \otimes \bR^{n}$ that is defined by stacking the tensors $T_1, T_2, \dots , T_n$ along a new index $i_{r+1}$.
For the the sake of proving \cref{thm:moment_gaussian_chaos},
it will be useful to consider the relationship between the variance parameters when we consider subgaussian random tensors $T \in (\bR^d)^{\otimes r-1}$ ``re-indexed'' by some partition $\mc{P} \in S(r-1)$.
The following observation relates our variance parameters to the variance parameters implicitly defined in Lata{\l}a's work.
\begin{proposition}\label{prop:partition_norm}
Let $T \in (\bR^d)^{\otimes r-1}$ be a subgaussian random tensor.
For any partition $\mc{P} \in S(r-1)$ and $\ell \in \{0, 1, \dots, |\mc{P}|\}$ we have
\[
\astdtwo{\ell}^2\left(T^{\mc{P}}\right) \le \sum_{\substack{\mc{Q} \subseteq S(r) \\ |\mc{Q}| = |\mc{P}|- \ell+1}}\normtwo{A^{\mc{Q}}}^2.
\]
\end{proposition}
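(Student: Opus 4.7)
The plan is to unpack both sides of the inequality and connect them through a natural injection from subsets of $[|\mc{P}|]$ to partitions of $[r]$. Write $\mc{P} = \{I_1, \dots, I_m\}$ with $m = |\mc{P}|$, so that $T^{\mc{P}}$ is a subgaussian random tensor of order $m$ whose $j$-th axis has dimension $d^{|I_j|}$. Unfolding \cref{def:astd} for $p = 2$ and pushing the supremum inside the sum via $\sup \sum \le \sum \sup$, it suffices to show that for each $J \subseteq [m]$ with $|J| = m - \ell$,
\[
\sup_{x_j \in B_2^{d^{|I_j|}},\, j \in J}\ \sum_{k=1}^n \bigl\| T_k^{\mc{P}} \text{ contracted along the axes in } J \text{ with } (x_j)_{j \in J}\bigr\|_F^2 \le \normtwo{A^{\mc{Q}(J)}}^2
\]
for a suitably chosen partition $\mc{Q}(J) \in S(r)$ of size $m - \ell + 1$.

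To each such $J$ I would associate the partition $\mc{Q}(J)$ obtained by keeping each $I_j$ with $j \in J$ as its own block of $\mc{Q}(J)$, and merging $\bigcup_{j \notin J} I_j$ together with the stacking index $r$ into one additional block. This yields $|\mc{Q}(J)| = |J| + 1 = m - \ell + 1$, and remains valid in the boundary case $J = [m]$, where the merged block is simply $\{r\}$. The key identity to verify is that the displayed supremum in fact equals $\normtwo{A^{\mc{Q}(J)}}^2$: expanding the $\ell_2$ injective norm of $A^{\mc{Q}(J)}$, the supremum over the unit vector indexing the merged block can be evaluated via the duality $\sup_{\|y\|_2 \le 1}\langle c, y\rangle = \|c\|_2$, and squaring produces exactly the sum of squared Frobenius norms on the left, once we recognize that $A_{\vec{i}, k} = (T_k)_{\vec{i}}$ by construction of $A$.

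To finish, observe that $J \mapsto \mc{Q}(J)$ is an injection into $\{\mc{Q} \in S(r) : |\mc{Q}| = m - \ell + 1\}$: $J$ can be recovered from $\mc{Q}(J)$ by identifying those blocks of $\mc{Q}(J)$ that do not contain $r$ with their counterparts $I_j$ in $\mc{P}$. Summing the per-$J$ identities and then dominating by the full sum over partitions gives the claim. The main obstacle is purely notational bookkeeping: translating carefully between the permutation $\pi_J$ used in \cref{def:astd} and the partition-reshaping used to define $A^{\mc{Q}}$, and verifying dimensional compatibility when identifying the contraction of $T^{\mc{P}}$ with the injective norm of $A^{\mc{Q}(J)}$.
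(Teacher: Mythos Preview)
Your proposal is correct and follows essentially the same route as the paper's proof: push the supremum inside the sum over subsets $J\subseteq[m]$, associate to each $J$ the partition $\mc{Q}(J)\in S(r)$ obtained by keeping the blocks $\{I_j:j\in J\}$ and merging the rest with $\{r\}$, and identify the resulting per-$J$ supremum with $\normtwo{A^{\mc{Q}(J)}}^2$ via the duality $\sum_i a_i^2=\sup_{\|y\|_2\le 1}(\sum_i a_i y_i)^2$. Your explicit verification of injectivity of $J\mapsto\mc{Q}(J)$ is slightly more detailed than the paper, which simply passes from the sum over $\mc{Q}\subseteq\mc{P}$ to the sum over all $\mc{Q}\in S(r)$ of the right size, but the content is identical.
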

\begin{proof}
Let $m$ be the size of the partition $\mc{P} = \{I_1, \dots, I_m\}$ and let $d_j = d^{|I_j|}$. 
Using the simple identity $\sum_{i=1}^N a_i^2 = \sup_{x \in B_2^N} \left(\sum_{i=1}^N a_i x_i\right)^2 $ we get
\begin{align*}
\astdtwo{\ell}^2\left(T^{\mc{P}}\right) &\le \sum_{\substack{I \subseteq [m] \\ |I| = m - \ell}} \sup_{x_1 \in B_2^{d_1}, \dots, x_m \in B_2^{d_m}}  \sum_{k=1}^n  \left\|\left(T_k^{\mc{P}}\right)^{\pi_I}(x_{\pi_I(1)}, \dots, x_{\pi_I(m-\ell)}, \cdot, \dots, \cdot) \right\|_{F}^2\\
&= \sum_{\substack{I \subseteq [m] \\ |I| = m - \ell}} \sup_{x_1 \in B_2^{d_1}, \dots, x_m \in B_2^{d_m}}  \sum_{k=1}^n  \sum_{\substack{i_{t'} \in [d_{t'}] \\ t' \in I^c}} \left(\sum_{\substack{i_t \in [d_t] \\ t \in I}} \left(T_k^{\mc{P}}\right)_{i_1, \dots, i_m}\prod_{t \in I} (x_t)_{i_t}\right)^2\\
&= \sum_{\substack{\mc{Q} \subseteq \mc{P} \\ |\mc{Q}| = m - \ell}}\normtwo{A^{\mc{Q} \cup \{i_{r}\cup \{i \in I : I \in \mc{P} \setminus \mc{Q} \}\}}}^2 \le \sum_{\substack{\mc{Q} \subseteq S(r) \\ |\mc{Q}| = m - \ell +1}}\normtwo{A^{\mc{Q}}}^2,
\end{align*}
as desired.
\end{proof}

\subsection{The PAC-Bayesian lemma}\label{sec:pac_bayes}
The following result is one incarnation of the PAC-Bayesian lemma.
It is a simple consequence of the Donsker-Varadhan variational formula.
We include a proof in~\cref{app:pac_bayes_proof} for completeness.
\begin{lemma}\label{lem:pacbayes}
Let $Z_1,\ldots,Z_n$ be independent random variables on a common measurable space $\mc{Z}$.
Let $\Theta$ (the \emph{parameter space}) be a subset of $\mathbb{R}^d$ and let $\pi$ be a probability measure (the \emph{prior}) on $\Theta$. 
Let $f_1, \dots, f_n : \mc{Z} \times \Theta \to \mathbb{R}$ be measurable functions such that $\E_{Z_i}[\exp(f_i(Z_i,\theta))] < \infty$ $\pi$-almost surely for all $i \in [n]$. 
Then
\begin{align}
    \E_{Z_1, \dots, Z_n} \left[\sup_{\rho \ll \pi} \sum^n_{i=1}\left(\E_{\theta \sim \rho}f_i(Z_i,\theta) - \E_{\theta \sim \rho}\log \left(\E_{Z_i}\exp(f_i(Z_i,\theta))\right)\right) - \KL(\rho \|\pi)\right] \le 0.
\end{align}
\end{lemma}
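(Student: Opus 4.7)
The plan is to reduce the lemma to the Donsker--Varadhan variational formula, then exchange expectation and logarithm via Jensen and exploit independence. The key algebraic simplification is to bundle the centering term into the exponent: define $g_i(Z_i,\theta) = f_i(Z_i,\theta) - \log \E_{Z_i}\exp(f_i(Z_i,\theta))$, so that the desired inequality becomes
\[
\E_{Z_1,\dots,Z_n}\left[\sup_{\rho \ll \pi} \E_{\theta \sim \rho}\sum_{i=1}^n g_i(Z_i,\theta) - \KL(\rho \| \pi)\right] \le 0.
\]

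First, for each fixed realization of $Z_1,\dots,Z_n$, I would apply the Donsker--Varadhan variational formula
\[
\sup_{\rho \ll \pi}\left(\E_{\theta \sim \rho} h(\theta) - \KL(\rho\|\pi)\right) = \log \E_{\theta \sim \pi} \exp(h(\theta))
\]
with $h(\theta) = \sum_{i=1}^n g_i(Z_i,\theta)$. This collapses the supremum over posteriors into a single log-moment-generating quantity, reducing the task to showing
\[
\E_{Z_1,\dots,Z_n} \log \E_{\theta \sim \pi} \exp\Bigl(\sum_{i=1}^n g_i(Z_i,\theta)\Bigr) \le 0.
\]

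Next, I would apply Jensen's inequality to move the logarithm outside the outer expectation, then use Fubini (justified by the integrability hypothesis on $\exp(f_i)$, which propagates to $\exp(g_i)$ since the centering constants are finite $\pi$-almost surely) to swap the expectation over $\theta$ with that over $Z_1,\dots,Z_n$. Using independence of the $Z_i$'s, the product of expectations factors, and each factor satisfies
\[
\E_{Z_i}\exp(g_i(Z_i,\theta)) = \frac{\E_{Z_i}\exp(f_i(Z_i,\theta))}{\E_{Z_i}\exp(f_i(Z_i,\theta))} = 1
\]
by construction, so the product is identically $1$ and the logarithm is $0$, yielding the claim.

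There is no real obstacle here; the only technical point worth flagging is the measurability of the supremum (so that the outer expectation is well-defined) and verifying that $g_i$ is $\pi$-almost surely finite, both of which follow from the integrability assumption stated in the lemma. The rest is a mechanical application of Donsker--Varadhan, Jensen, Fubini, and independence in that order.
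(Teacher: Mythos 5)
Your proposal is correct and matches the paper's proof essentially step for step: apply Donsker--Varadhan conditionally on $Z_1,\dots,Z_n$ to the centered sum $\sum_i g_i$, pull the logarithm out with Jensen, swap expectations, and use independence so that each factor $\E_{Z_i}\exp(g_i(Z_i,\theta))$ equals $1$. No further comment is needed.
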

We call the probability measures $\rho$ that appear above \emph{posteriors}.
We should mention that there is a deviation version of the above lemma, but we will not use it here.

The terminology ``parameter space'', ``prior'', and ``posterior'' used in \cref{lem:pacbayes} are inherited from the original statistical learning context where this lemma was first used. \cref{lem:pacbayes} is at the heart of the \emph{PAC-Bayesian approach}.
The PAC-Bayesian approach \cite{shawe-taylor1997,mcallester1998,mcallester1999}  was initially developed as an attempt to blend the Probably Approximately Correct (PAC) learning model~\cite{vapnik1964class,valiant1984theory} with ideas from bayesian statistics.
See \cite{MAL-112} for more background.

It seems that it was first noticed by Audibert and Catoni that \cref{lem:pacbayes} is very useful for controlling certain empirical processes that arise in the context of non-asymptotic robust statistics~\cite{audibert2010linear, audibert2011robust, catoni2012challenging, catoni2016pac}.
In those papers, the relevant empirical processes controlled were related to certain robust statistical estimators.
Subsequent work has found this lemma useful for the non-asymptotic study of random matrices \cite{oliveira2016lower, mourtada2022exact,zhivotovskiy2024dimension}.
Most relevant to our work is Zhivotovskiy's paper \cite{zhivotovskiy2024dimension} where he derives dimension-free deviation inequalities for sums of random matrices in a wide range of settings. 
His results are obtained by applying \cref{lem:pacbayes} in a number of slick ways to problems that are superficially similar to the one we study here. 
In contrast, our result follows from applying \cref{lem:pacbayes} in the most elementary way. 

\section{Proofs of main results}
We will need the following simple but clunky identity. 
It states that the second moments of certain non-centered chaoses are determined by the contractions of their associated tensors.
\begin{proposition}\label{prop:second_moment_expanision}
Let $X_1 \in \bR^{d_1}, \dots , X_r \in \bR^{d_r}$ be independent zero mean vectors such that each $X_i$ has covariance matrix $\Sigma_i  = \beta^{-1} I_{d_i}$ and let $A \in \bR^{d_1} \otimes \dots \otimes \bR^{d_r}$ be a deterministic tensor.
For any fixed vectors $x_1 \in \bR^{d_1}, \dots , x_r \in \bR^{d_r}$ we have 
\[
\E \left(\sum_{i_1 \in [d_1], \dots, i_r \in [d_r]} A_{i_1, \dots , i_r} \prod_{t =1}^r (x_t + X_t)_{i_t} \right)^2 = \sum_{\ell=0}^r \beta^{-r+\ell} \sum_{\substack{I \subseteq [r]\\ |I| = \ell}} \|A^{\pi_I}(x_{\pi_I(1)}, \dots, x_{\pi_I(\ell)}, \cdot, \dots, \cdot )\|_{F}^2.
\]
\end{proposition}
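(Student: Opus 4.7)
The plan is to just expand everything directly; nothing deep is happening, the content is entirely bookkeeping.

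First I would expand the inner product $\prod_{t=1}^r (x_t + X_t)_{i_t}$ as a sum over subsets: for each $I \subseteq [r]$, the corresponding term picks $(x_t)_{i_t}$ for $t \in I$ and $(X_t)_{i_t}$ for $t \in I^c$. This turns the left-hand side into a sum
\[
\sum_{I \subseteq [r]} S_I, \qquad S_I \coloneqq \sum_{i_1, \dots, i_r} A_{i_1, \dots, i_r} \prod_{t \in I} (x_t)_{i_t} \prod_{t \in I^c} (X_t)_{i_t}.
\]
Squaring gives $\sum_{I, I'} S_I S_{I'}$, so the next step is to compute $\E[S_I S_{I'}]$.

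The key observation is that when I write out $S_I S_{I'}$ with fresh dummy indices $(i_t)$ and $(j_t)$, the expectation factorizes across $t$ by independence of $X_1, \dots, X_r$. For $t \in I \cap I'$ the factor is deterministic, namely $(x_t)_{i_t}(x_t)_{j_t}$. For $t \in I \setminus I'$ we get $(x_t)_{i_t}\,\E(X_t)_{j_t} = 0$, and similarly for $t \in I' \setminus I$. So $\E[S_I S_{I'}] = 0$ whenever $I \neq I'$, and only the diagonal $I = I'$ survives. For $t \in I^c$ we get $\E[(X_t)_{i_t}(X_t)_{j_t}] = \beta^{-1}\delta_{i_t j_t}$ from the covariance hypothesis.

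Substituting these in, the diagonal term $\E S_I^2$ becomes
\[
\beta^{-(r-|I|)} \sum_{\substack{i_t, t \in I^c}} \Biggl(\sum_{\substack{i_t, t \in I}} A_{i_1, \dots, i_r} \prod_{t \in I} (x_t)_{i_t}\Biggr)^{\!2},
\]
where I have collapsed the $\delta$'s by identifying $j_t = i_t$ for $t \in I^c$. The inner parenthesized expression is, by definition of the permuted partial contraction $A^{\pi_I}(x_{\pi_I(1)}, \dots, x_{\pi_I(|I|)}, \cdot, \dots, \cdot)$, the $(i_t)_{t \in I^c}$ entry of this order $r - |I|$ tensor. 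Therefore the outer sum is exactly $\|A^{\pi_I}(x_{\pi_I(1)}, \dots, x_{\pi_I(|I|)}, \cdot, \dots, \cdot)\|_F^2$.

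Finally, grouping subsets by their size $\ell = |I|$ and noting $r - |I| = r - \ell$, I obtain
\[
\sum_{\ell = 0}^{r} \beta^{-(r-\ell)} \sum_{\substack{I \subseteq [r] \\ |I| = \ell}} \|A^{\pi_I}(x_{\pi_I(1)}, \dots, x_{\pi_I(\ell)}, \cdot, \dots, \cdot)\|_F^2,
\]
which is the claimed identity. The only potential obstacle is keeping the notation around $\pi_I$ consistent with the definition from the introduction, but once one writes out the contraction in coordinates, matching the two sides is immediate.
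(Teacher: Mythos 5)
Your proof is correct and is essentially the same direct expansion as the paper's: the paper first computes $\E[(x_t+X_t)_{i_t}(x_t+X_t)_{j_t}] = (x_t)_{i_t}(x_t)_{j_t} + (\Sigma_t)_{i_t,j_t}$ coordinate-by-coordinate and then expands the product over $t$ into subsets $I \subseteq [r]$, whereas you expand into subsets first and then observe that the off-diagonal terms $\E[S_I S_{I'}]$ vanish by the zero-mean assumption; these are the same computation in a different order, and both land on the identical collapse of the covariance deltas into the Frobenius norms of the partial contractions.
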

\begin{proof}
Expanding the quadratic yields
\begin{align*}
    &\E \left(\sum_{i_1 \in [d_1], \dots, i_r \in [d_r]} A_{i_1, \dots , i_r}\prod_{t =1}^r (x_t + X_t)_{i_t} \right)^2 \\
    &= \sum_{i_1, i_{r+1} \in [d_1], \dots, i_r, i_{2r} \in [d_r]} A_{i_1, \dots , i_r}A_{i_{r+1}, \dots , i_{2r}}  \prod_{t =1}^r \E \left[(x_t + X_t)_{i_t} (x_t + X_t)_{i_{r+t}}\right]\\
    &= \sum_{i_1, i_{r+1} \in [d_1], \dots, i_r, i_{2r} \in [d_r]} A_{i_1, \dots , i_r}A_{i_{r+1}, \dots , i_{2r}}  \prod_{t =1}^r \left((x_t)_{i_t}(x_t)_{i_{r+t}} + (\Sigma_{t})_{i_{t}, i_{r+t}}\right)\\
    &= \sum_{i_1, i_{r+1} \in [d_1], \dots, i_r, i_{2r} \in [d_r]} A_{i_1, \dots , i_r}A_{i_{r+1}, \dots , i_{2r}}
    \sum_{I \subseteq [r]} \prod_{t \in I}(x_t)_{i_t}(x_t)_{i_{r+t}}\prod_{t' \in I^c} (\Sigma_{t'})_{i_{t'}, i_{r+t'}}\\
     &=\sum_{I \subseteq [r]}\; \sum_{\substack{i_{t'} = i_{r+t'} \in [d_{t'}] \\ t' \in I^c}} \; \sum_{\substack{i_t, i_{r+t} \in [d_t] \\ t \in I}} A_{i_1, \dots , i_r}A_{i_{r+1}, \dots , i_{2r}}
     \prod_{t \in I}(x_t)_{i_t}(x_t)_{i_{r+t}} \beta^{-r +|I|}\\
     &=\sum_{\ell=0}^r \beta^{-r+\ell} \sum_{\substack{I \subseteq [r]\\ |I| = \ell}} \;\sum_{\substack{i_{t'} = i_{r+t'} \in [d_{t'}] \\ t' \in I^c}} \; \sum_{\substack{i_t, i_{r+t} \in [d_t] \\ t \in I}}  A_{i_1, \dots , i_r}A_{i_{r+1}, \dots , i_{2r}}
     \prod_{t \in I}(x_t)_{i_t}(x_t)_{i_{r+t}}\\
     &=\sum_{\ell=0}^r \beta^{-r+\ell} \sum_{\substack{I \subseteq [r]\\ |I| = \ell}} \;\sum_{\substack{i_{t'} \in [d_{t'}] \\ t' \in I^c}} \; \left(\sum_{\substack{i_t \in [d_t] \\ t \in I}}  A_{i_1, \dots , i_r}
     \prod_{t \in I}(x_t)_{i_t}\right)^2\\
     &= \sum_{\ell=0}^r \beta^{-r+\ell} \sum_{\substack{I \subseteq [r]\\ |I| = \ell}} \|A^{\pi_I}(x_{\pi_I(1)}, \dots, x_{\pi_I(\ell)}, \cdot, \dots, \cdot )\|_{F}^2,
\end{align*}
as desired.
\end{proof}
We are now ready to prove the main result.
\begin{proof}[Proof of \cref{thm:main}]
We will use the PAC-Bayesian lemma (\cref{lem:pacbayes}) to bound the following quantity:
\begin{equation}\label{eq:tensor_process}
\E_{\xi_1 \dots, \xi_n}\left[ \sup_{x_1 \in B_p^{d_1}, \dots, x_r \in B_p^{d_r}} \sum_{k=1}^n \xi_k T_k(x_1, \dots, x_r) \right].
\end{equation}
To do so, we need to make some choices.
We choose our parameter space to be $\Theta = \bR^{d_1} \times \dots \times \bR^{d_r}$ and pick our prior 
\[
\pi = \mc{N}(0, \beta^{-1} I_{d_1}) \otimes \dots \otimes \mc{N}(0, \beta^{-1} I_{d_r})
\]
over $\Theta$, where the parameter $\beta > 0$ is arbitrary.
Every parameter $\theta = (\theta_1, \dots, \theta_r) \in \Theta$ has a corresponding posterior $\rho_\theta = \mc{N}(\theta_1, \beta^{-1} I_{d_1}) \otimes \dots \otimes \mc{N}(\theta_r, \beta^{-1} I_{d_r})$.
Define the set $\mc{M}_{p} = \{ \rho_\theta : \theta \in B_p^{d_1} \times \dots \times  B_p^{d_r}\}$. 
As we will see below, these probability measures play a crucial role in how we apply the PAC-Bayesian lemma.
For $k \in [n]$ define the function
\[
f_k(\xi_k, \theta) = \lambda \xi_k \sum_{i_1, \dots, i_r=1}^d(T_k)_{i_1, \dots, i_r}\prod_{t=1}^r (\theta_t)_{i_t},
\]
where the parameter $\lambda > 0$ will be set below.
We can now reformulate \cref{eq:tensor_process} as a smooth process by exploiting the linearity of expectation.
Since $T_k$ is multi-linear in $x_1, \dots, x_k$, we can re-write \cref{eq:tensor_process} as
\begin{equation}\label{eq:smooth_process}
\E_{\xi_1 \dots, \xi_n}\left[ \sup_{x_1 \in B_p^{d_1}, \dots, x_r \in B_p^{d_r}}\sum_{k=1}^n \xi_k T_k(x_1, \dots, x_k ) \right] = \frac{1}{\lambda}\E_{\xi_1 \dots, \xi_n}\left[ \sup_{\rho_\theta \in \mc{M}_p} \sum_{k=1}^n \E_{\theta \sim \rho_\theta}f_k(\xi_k, \theta)\right].
\end{equation}
Applying \cref{lem:pacbayes} with $Z_i = \xi_i$ to the smooth process on the right-hand side of \cref{eq:smooth_process} implies\footnote{It is not hard to see that every $f_i$ has a bounded MGF $\pi$-almost surely.}
\begin{align}\label{eq:pac_bayes_tensor}
\E_{\xi_1 , \dots, \xi_n}\left[ \sup_{\rho_\theta \in \mc{M}_p } \frac{1}{\lambda}\sum_{k=1}^n \left( \E_{\theta \sim \rho_\theta}f_k(\xi_k,\theta) 
 -\E_{\theta \sim \rho_\theta}\log\left( \E_{\xi_k} \exp(f_k(\xi_k,\theta))\right) \right) - \frac{\KL(\rho_\theta \| \pi)}{\lambda} \right] \le 0.
\end{align}
Notice that we have restricted our attention to the set of probability measure $\mc{M}_p$.
This is crucial for our goal of computing explicit bounds on \cref{eq:tensor_process}.
All that remains is to bound both the logarithm of the MGF terms and the KL divergence term \emph{uniformly} for all $\rho_\theta \in \mc{M}_p$. 
We begin with the former.
Using the sub-gaussianity of $\xi_k$ (conditioned on $\theta \sim \rho_\theta$) followed by \cref{prop:second_moment_expanision} gives us, for any fixed $\rho_\theta$, that
\begin{align}
\sum_{k=1}^n\E_{\theta \sim \rho_\theta}\log\left( \E_{\xi_k} \exp(f_k(\xi_k,\theta))\right)
&\le \sum_{k=1}^n\frac{\lambda^2}{2} \E_{\theta \sim \rho_\theta}\left(\sum_{i_1 \in [d_1], \dots, i_r \in [d_r]} (T_k)_{i_1, \dots, i_r}\prod_{t=1}^r (\theta_t)_{i_t}\right)^2 \nonumber \\
&= \frac{\lambda^2}{2}\sum_{\ell=0}^r \beta^{-r+\ell} \sum_{k=1}^n 
 \sum_{\substack{I \subseteq [r]\\ |I| = \ell}} \|T_k^{\pi_I}(\theta_{\pi_I(1)}, \dots, \theta_{\pi_I(\ell)}, \cdot, \dots, \cdot )\|_{F}^2 \nonumber \\
&\le \frac{\lambda^2}{2}\sum_{\ell=0}^r \beta^{-r+\ell} \sup_{\theta_1 \in B_p^{d_1}, \dots, \theta_r \in B_p^{d_r}}\sum_{k=1}^n\sum_{\substack{I \subseteq [r] \nonumber \\ |I| = \ell}} \|T_k^{\pi_I}(\theta_{\pi_I(1)}, \dots, \theta_{\pi_I(\ell)}, \cdot, \dots, \cdot )\|_{F}^2 \nonumber \\
&= \frac{\lambda^2}{2}\sum_{\ell=0}^r \beta^{-r+\ell} \; \astd{r-\ell}^2= \frac{\lambda^2}{2}\sum_{\ell=0}^r \beta^{-\ell} \; \astd{\ell}^2.\label{eq:mgf_bound}
\end{align}
We now bound the KL divergence term. 
From the standard equation for the KL divergence between two univariate Gaussians together with the tensorization of KL for product measures we can conclude the bound
\begin{equation}\label{eq:kl_bound}
\KL(\rho_\theta \| \pi) = \frac{\beta \sum_{t=1}^r\|\theta_t\|_2^2}{2} \le\frac{\beta \sum_{t=1}^r d_t^{1 - \frac{2}{p}}}{2}.
\end{equation}
We can use the uniform bounds in \cref{eq:mgf_bound,eq:kl_bound} to further lower bound the left-hand side of \cref{eq:pac_bayes_tensor} which, after rearranging, yields
\begin{align*}
\E_{\xi_1, \dots, \xi_n}\left[ \sup_{\rho_\theta \in \mc{M}_p} \frac{1}{\lambda}\sum_{k=1}^n \E_{\theta \sim \rho_\theta}f_k(\xi_k,\theta) \right] &\le \frac{\lambda}{2}\sum_{\ell=0}^r \beta^{-\ell} \; \astd{\ell}^2 + \frac{\beta \sum_{t=1}^r d_t^{1 - \frac{2}{p}}}{2\lambda} \\
&= \sqrt{\left(\sum_{\ell =0}^r \beta^{1-\ell}\astd{\ell}^{2}\right)\sum_{t=1}^r d_t^{1 - \frac{2}{p}}}.
\end{align*}
The equality above follows from an explicit optimization of the parameter $\lambda > 0$.
Since $\beta > 0$ was arbitrary, the final line above together with \cref{eq:smooth_process} concludes the proof.
\end{proof}

\cref{cor:interpretable} is now a simple consequence of the following proposition. 
We defer the proof to~\cref{app:optimization}.
\begin{proposition}\label{prop:optimization}
For any $r \ge 2$ and $a_0, a_2, \cdots , a_r > 0$ we have
\[
2(r-1)^{\frac{1-r}{r}}\max_{2 \le \ell \le r} a_\ell^{\frac{1}{\ell}}a_0^{\frac{\ell-1}{\ell}} \le \inf_{x > 0 } \left\{ a_0 x + \sum_{\ell=2}^{r}  a_\ell x^{1-\ell} \right\} \le r \max_{2 \le \ell \le r} a_\ell^{\frac{1}{\ell}}a_0^{\frac{\ell-1}{\ell}}.
\]
\end{proposition}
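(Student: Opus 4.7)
The plan is to treat the upper and lower bounds separately, each via a short test-point choice or AM-GM, and to conclude with a one-variable monotonicity check.

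For the upper bound, I would let $M := \max_{2 \le \ell \le r} a_\ell^{1/\ell} a_0^{(\ell-1)/\ell}$ and simply test the infimum at the point $x_\star = M/a_0$. The term $a_0 x_\star$ is then exactly $M$, and for each $\ell \in \{2,\dots,r\}$ the defining inequality $M \ge a_\ell^{1/\ell} a_0^{(\ell-1)/\ell}$ raised to the $\ell$th power rearranges to $a_\ell x_\star^{1-\ell} = a_\ell a_0^{\ell-1} M^{1-\ell} \le M$. Summing the one contribution from $a_0 x_\star$ and the $r-1$ contributions from the $a_\ell x_\star^{1-\ell}$ yields at most $rM$, which is the stated upper bound.

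For the lower bound, I would fix $x > 0$ and any $\ell \in \{2,\dots,r\}$ and drop all the other (nonnegative) terms, so the sum is at least $a_0 x + a_\ell x^{1-\ell}$. The key step is a weighted AM-GM with weights $\tfrac{\ell-1}{\ell}$ and $\tfrac{1}{\ell}$, chosen precisely so the $x$-dependence cancels:
\[
a_0 x + a_\ell x^{1-\ell} \ge g(\ell) \cdot a_\ell^{1/\ell} a_0^{(\ell-1)/\ell}, \qquad g(\ell) := \ell (\ell-1)^{(1-\ell)/\ell}.
\]
Choosing $\ell = \ell^\star$ to be an index achieving the maximum in $M$ gives the $x$-free lower bound $\inf_{x>0}\{a_0 x + \sum_{\ell} a_\ell x^{1-\ell}\} \ge g(\ell^\star) M$. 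It remains to show $g(\ell^\star) \ge 2(r-1)^{(1-r)/r}$ for any $\ell^\star \in \{2,\dots,r\}$: a short calculation gives $(\log g)'(\ell) = -\log(\ell-1)/\ell^2 \le 0$ for $\ell \ge 2$, so $g$ is non-increasing. Hence $g(\ell^\star) \ge g(r) = r(r-1)^{(1-r)/r} \ge 2(r-1)^{(1-r)/r}$, using only $r \ge 2$.

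I do not anticipate a real obstacle here: the upper bound reduces to plugging in one value of $x$, and the lower bound is a textbook application of weighted AM-GM followed by an elementary monotonicity check. The only points requiring care are (i) using the weights $(\ell-1)/\ell, 1/\ell$ rather than $1/2, 1/2$ so that the powers of $x$ actually cancel, and (ii) applying AM-GM only to the single term $a_\ell x^{1-\ell}$ with $\ell = \ell^\star$ (after discarding the others), so that the bound produced is in terms of the maximum $M$ rather than some average over $\ell$.
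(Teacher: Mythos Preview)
Your proposal is correct. Both inequalities go through exactly as you describe: plugging in $x_\star=M/a_0$ gives each of the $r$ summands a value at most $M$, and for the lower bound the weighted AM--GM with weights $(\ell-1)/\ell,\,1/\ell$ indeed yields $a_0x+a_\ell x^{1-\ell}\ge g(\ell)\,a_\ell^{1/\ell}a_0^{(\ell-1)/\ell}$, while the derivative computation $(\log g)'(\ell)=-\log(\ell-1)/\ell^2$ confirms the monotonicity needed to reach $g(\ell^\star)\ge g(r)\ge 2(r-1)^{(1-r)/r}$.

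The paper's proof follows a different route in the details, though the underlying ideas are close. For the upper bound, the paper first relaxes the sum to $r\max_{\ell\in\{0,2,\dots,r\}}a_\ell x^{1-\ell}$, passes to logarithmic variables, and uses the intermediate value theorem to locate a crossing point $y^\star$ where the increasing branch $b_0+y$ meets the decreasing envelope $\max_{\ell\ge2}b_\ell+(1-\ell)y$; this crossing point, once unwound, is exactly your $x_\star=M/a_0$. Your direct test-point argument bypasses the IVT machinery entirely and is shorter. For the lower bound, the paper first splits $a_0x$ into $r-1$ equal pieces and pairs each with one $a_\ell x^{1-\ell}$ before optimizing, whereas you keep $a_0x$ whole and discard all but the $\ell^\star$ term; your version avoids the extra $(r-1)^{(1-\ell)/\ell}$ factor from the splitting and leads to a cleaner monotonicity check on $g(\ell)$ alone. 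Both routes reach the same constants, but yours is the more elementary of the two.
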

\begin{proof}[Proof of \cref{cor:interpretable}]
Invoking \cref{thm:main} and \cref{prop:optimization} with $(a_\ell) = (\astd{\ell}^2)$ yields the claim.
\end{proof}

\section{Lata{\l}a's estimate on the moments of Gaussian chaoses}\label{sec:moments}
In this section we provide a short proof of~\cref{thm:moment_gaussian_chaos}.
We would like to emphasize once more that the proof follows Lata{\l}a's original argument, up to the use of \cref{thm:main}.
This difference drastically shortens and simplifies the \emph{overall} proof.
We will need the following corollary of \cref{thm:main}.
\begin{corollary}\label{cor:tensor_moments}
Let $T \in (\mathbb{R}^d )^{\otimes r-1}$ be a subgaussian random tensor.
For any partition $\mc{P} \in S(r-1)$ and $p \ge 2$ we have
\[
\left(\E \normtwo{{T^{\mc{P}}}}^p\right)^{\frac{1}{p}} \le C_{|\mc{P}|}\sum_{\mc{Q} \in S(r)} p^{\frac{-|\mc{P}|+|\mc{Q}|}{2}}\normtwo{ A(\{T_k\}_{k=1}^n)^{\mc{Q}}}.
\]
\end{corollary}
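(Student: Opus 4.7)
The plan is to apply \cref{thm:main} directly to the reindexed random tensor $T^{\mc{P}}$, treated as a subgaussian random tensor of order $m \coloneqq |\mc{P}|$ living in $\bR^{d^{|I_1|}} \otimes \cdots \otimes \bR^{d^{|I_m|}}$, and then to upgrade the resulting expectation bound to a $p$-th moment bound by a concentration argument.

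The first step is to invoke \cref{thm:main} with the $\ell_2$-injective norm (so that the $\sum_t d_t^{1-2/p}$ factor collapses to $m$), obtaining
\[
\E\,\normtwo{T^{\mc{P}}} \le \sqrt{m \sum_{\ell=0}^m \beta^{\,1-\ell}\,\astdtwo{\ell}^2(T^{\mc{P}})}
\]
for every $\beta > 0$. Next I would plug in \cref{prop:partition_norm}, which bounds each $\astdtwo{\ell}^2(T^{\mc{P}})$ by $\sum_{\mc{Q} \in S(r),\,|\mc{Q}| = m - \ell + 1} \normtwo{A^{\mc{Q}}}^2$. Reindexing by $k = m - \ell + 1$, applying $\sqrt{\sum_i a_i^2} \le \sum_i a_i$ to push the outer square root inside the sum over $k$, and choosing $\beta = p$, the powers $\beta^{(k-m)/2}$ become precisely $p^{(|\mc{Q}|-|\mc{P}|)/2}$ and the expectation bound takes the form
\[
\E\,\normtwo{T^{\mc{P}}} \le C_{|\mc{P}|} \sum_{\mc{Q} \in S(r),\,|\mc{Q}| \le |\mc{P}|+1} p^{(|\mc{Q}|-|\mc{P}|)/2}\,\normtwo{A^{\mc{Q}}},
\]
which already has the correct shape, modulo upgrading the first moment to the $p$-th moment.

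The last step is to do exactly this upgrade. The map $\xi \mapsto \normtwo{T^{\mc{P}}}$ is convex and, as a function on $\bR^n$, is $\sqrt{\astdtwo{0}^2(T^{\mc{P}})}$-Lipschitz in the Euclidean metric; by \cref{prop:partition_norm} this Lipschitz constant is at most $\sqrt{\sum_{|\mc{Q}|=|\mc{P}|+1} \normtwo{A^{\mc{Q}}}^2}$. For Gaussian $\xi$, Borell-TIS then yields
\[
\bigl(\E\,\normtwo{T^{\mc{P}}}^p\bigr)^{1/p} \lesssim \E\,\normtwo{T^{\mc{P}}} + \sqrt{p\,\astdtwo{0}^2(T^{\mc{P}})},
\]
and the extra term is dominated by $\sqrt{p}\sum_{|\mc{Q}|=|\mc{P}|+1} \normtwo{A^{\mc{Q}}}$, i.e.\ exactly the $|\mc{Q}| = |\mc{P}|+1$ contribution already present in the expectation bound (note $(|\mc{Q}|-|\mc{P}|)/2 = 1/2$ there). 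For general subgaussian $\xi$ the same conclusion follows from a standard Gaussian comparison (symmetrization plus contraction on the convex functional $\xi \mapsto \normtwo{T^{\mc{P}}}^p$), which transfers the moment bound from the Gaussian case at the cost of universal constants.

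The main obstacle is this final concentration step for general subgaussian $\xi$: the Gaussian case is immediate from Borell-TIS, but passing to general subgaussian coefficients requires an additional comparison argument (or alternatively an appeal to the deviation version of the PAC-Bayesian lemma mentioned in \cref{sec:pac_bayes}, which directly produces moment-type bounds rather than just expectation bounds). Everything else is bookkeeping that combines \cref{thm:main} with the variance comparison in \cref{prop:partition_norm}.
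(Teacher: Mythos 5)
Your proposal is correct and follows essentially the same route as the paper: apply \cref{thm:main} to $T^{\mc{P}}$ with the $\ell_2$ injective norm and $\beta = p$, convert the resulting variance parameters via \cref{prop:partition_norm}, and upgrade the expectation bound to a $p$-th moment bound using concentration of the supremum of the process. The only divergence is your concern about general subgaussian coefficients: the paper sidesteps this by treating $\normtwo{T^{\mc{P}}}$ as the supremum of a Gaussian process (which suffices for its sole application in the proof of \cref{thm:moment_gaussian_chaos}, where $T = A(\cdot,\dots,\cdot,g_r)$), and it computes the deviation parameter exactly as $\sigma = \normtwo{A^{\mc{P}\cup\{\{r\}\}}}$ rather than bounding $\astdtwo{0}(T^{\mc{P}})$ through \cref{prop:partition_norm}, though your slightly weaker bound also suffices.
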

\begin{proof}
Let $m \le r-1$ be the cardinality of the partition $\mc{P} = \{I_1, \dots, I_m\} \in S(r-1)$.
For $i \in [m]$ define $d_i = d^{|I_i|}$ and notice that 
\[
\normtwo{T^{\mc{P}}} = \sup_{x_1 \in B_2^{d_1}, \dots, x_{m} \in B_2^{d_m}} \sum_{i_r=1}^d g_{i_r} A_{i_r}^{\mc{P}}(x_1, \dots, x_m)
\]
is the supremum of a (complicated) Gaussian process. 
Using the well known bound on the tails of the supremum of a gaussian process \cite[Lemma 2.10.6]{talagrand2022} together with the equivalence of tails and moments \cite[Exercise 2.3.8]{talagrand2022} gives us the following:
\begin{equation}\label{eq:moments_gaussian}
\left(\E \normtwo{T^{\mc{P}}}^p\right)^{\frac{1}{p}} \le \E\normtwo{T^{\mc{P}}} + C\sqrt{p}\sigma
\end{equation}
where the variance $\sigma^2$ is defined as 
\begin{align*}
\sigma^2 &= \sup_{x_1 \in B_2^{d_1}, \dots, x_m \in B_2^{d_m}} \E T^{\mc{P}}(x_1, \dots, x_m)^2 = \sup_{x_1 \in B_2^{d_1}, \dots, x_m \in B_2^{d_m}} \sum_{i_r=1}^{d} A_{i_r}^{\mc{P}}(x_1, \dots, x_m)^2 = \normtwo{A^{\mc{P}\cup\{\{r\}\}}}^2.
\end{align*}
Applying \cref{thm:main} with $\beta = p$  to the right-hand side of~\cref{eq:moments_gaussian}  and recalling  \cref{prop:partition_norm} yields
\[
\left(\E\normtwo{T^{\mc{P}}}^p\right)^{\frac{1}{p}} \le C_m \sum_{\mc{Q} \in S(r)}p^{\frac{-m+|\mc{Q}|}{2}}\normtwo{A^{\mc{Q}}} + C\sqrt{p}\normtwo{A^{\mc{P}\cup \{\{r\}\}}} \le C_m' \sum_{\mc{Q} \in S(r)}p^{\frac{-m+|\mc{Q}|}{2}} \normtwo{A^\mc{Q}},
\]
as desired.
\end{proof}
\begin{proof}[Proof of \cref{thm:moment_gaussian_chaos}]
We will prove the result via induction on the order $r$.
For $r=1$ standard bounds on the moments of a Gaussian random variable imply that
\[
\left\|\sum_{i =1}^d A_i g_i \right\|_p \le C \sqrt{p} \left(\sum_{i=1}^d A_i^2 \right)^\frac{1}{2} = C\sqrt{p}\normtwo{A^{\{\{1\}\}}}.
\]
Assume that the claim is true for chaoses of order $r-1$.
Let $T = A(\cdot, \dots, \cdot, g_r) \in (\bR^d)^{\otimes r-1}$.
We can write
\[
X \coloneqq \sum_{i_1, \dots, i_r \in [d]}A_{i_1, \dots, i_r} \prod_{t=1}^r (g_t)_{i_t} = \sum_{i_1, \dots i_{r-1} \in [d]} T_{i_1, \dots, i_{r-1}} \prod_{t=1}^{r-1} (g_t)_{i_t}
\]
and from our inductive hypothesis we have, conditioned on $g_r$, that
\[
\left(\E\left( |X|^p \mid g_r\right)\right)^{\frac{1}{p}} \le C_{r-1} \sum_{\mc{P} \in S(r-1)} p^{\frac{|\mc{P}|}{2}} \normtwo{T^{\mc{P}}}.
\]
Minkowski's (triangle) inequality for $L^p$ norms yields
\[
\| X \|_p \le C_{r-1} \left( \E \bigg(\sum_{\mc{P} \in S(r-1)} p^{\frac{|\mc{P}|}{2}} \normtwo{T^{\mc{P}}}\bigg)^p \right)^{\frac{1}{p}} \le C_{r-1} \sum_{\mc{P} \in S(r-1)} p^{\frac{|\mc{P}|}{2}} \left(\E\normtwo{T^{\mc{P}}}^p\right)^{\frac{1}{p}}.
\]
Applying~\cref{cor:tensor_moments} to the right-hand side of the above gives us
\[
\| X \|_p \le C_{r-1} \sum_{\mc{P} \in S(r-1)} p^{\frac{|\mc{P}|}{2}}C_{|\mc{P}|} \sum_{\mc{Q} \in S(r)}p^{\frac{-|\mc{P}| + |\mc{Q}|}{2}} \normtwo{A^{\mc{Q}}} \le  C_{r} \sum_{\mc{P} \in S(r)}  p^{\frac{|\mc{P}|}{2}} \normtwo{A^{\mc{P}}} ,
\] 
as desired.
\end{proof}
\paragraph{Acknowledgments.} 
I would like to thank Haotian Jiang and Kevin Lucca for valuable discussions on the connections between their work and the results presented here.
I am also very grateful to Rafa{\l} Lata{\l}a and David Wu for carefully reading an earlier version of this paper and for their helpful comments which improved the presentation.
Finally, I am indebted to Omar Alrabiah for allowing me to include his proof of~\cref{prop:optimization}, and for \emph{many} enjoyable discussions on the closely related problem of proving lower bounds for Locally Decodable Codes.
{\footnotesize
\bibliographystyle{alpha}
\bibliography{refs}

\newcommand{\etalchar}[1]{$^{#1}$}
\begin{thebibliography}{{McA}99}

\bibitem[AC10]{audibert2010linear}
Jean-Yves Audibert and Olivier Catoni.
\newblock Linear regression through {PAC-Bayesian} truncation.
\newblock {\em arXiv preprint arXiv:1010.0072}, 2010.

\bibitem[AC11]{audibert2011robust}
Jean-Yves Audibert and Olivier Catoni.
\newblock Robust linear least squares regression.
\newblock {\em The Annals of Statistics}, 39(5):2766--2794, 2011.

\bibitem[ACSZ25]{adenali202X}
Ishaq {Aden-Ali}, Yeshwanth Cherapanamjeri, Abhishek Shetty, and Nikita Zhivotovskiy, 2025.
\newblock Forthcoming.

\bibitem[BGJ{\etalchar{+}}24]{bandeira2024tensor}
Afonso Bandeira, Sivakanth Gopi, Haotian Jiang, Kevin Lucca, and Thomas Rothvoss.
\newblock A geometric perspective on the injective norm of sums of random tensors.
\newblock {\em arXiv preprint arXiv:2411.10633}, 2024.

\bibitem[Boe24]{boedihardjo2024injective}
March~T Boedihardjo.
\newblock Injective norm of random tensors with independent entries.
\newblock {\em arXiv preprint arXiv:2412.21193}, 2024.

\bibitem[Cat12]{catoni2012challenging}
Olivier Catoni.
\newblock Challenging the empirical mean and empirical variance: a deviation study.
\newblock {\em Annales de l'{Institut Henri Poincar{\'e}}, Probabilit{\'e}s et Statistiques}, 48(4):1148--1185, 2012.

\bibitem[Cat16]{catoni2016pac}
Olivier Catoni.
\newblock {PAC-Bayesian} bounds for the {Gram} matrix and least squares regression with a random design.
\newblock {\em arXiv preprint arXiv:1603.05229}, 2016.

\bibitem[dlPMS95]{delaPena1995}
Victor~H. de~la Pena and S.~J. Montgomery-Smith.
\newblock {Decoupling Inequalities for the Tail Probabilities of Multivariate $U$-Statistics}.
\newblock {\em The Annals of Probability}, 23(2):806--816, 1995.

\bibitem[HDGR25]{MAL-112}
Fredrik Hellström, Giuseppe Durisi, Benjamin Guedj, and Maxim Raginsky.
\newblock Generalization bounds: Perspectives from information theory and pac-bayes.
\newblock {\em Foundations and Trends® in Machine Learning}, 18(1):1--223, 2025.

\bibitem[HW71]{HansonWright1971}
D.~L. Hanson and F.~T. Wright.
\newblock {A Bound on Tail Probabilities for Quadratic Forms in Independent Random Variables}.
\newblock {\em The Annals of Mathematical Statistics}, 42(3):1079--1083, 1971.

\bibitem[Lat06]{latala2006estimates}
Rafał Latała.
\newblock {Estimates of moments and tails of Gaussian chaoses}.
\newblock {\em The Annals of Probability}, 34(6):2315--2331, 2006.

\bibitem[LP86]{LustPiquard1986}
Françoise Lust-Piquard.
\newblock Inégalités de khintchine dans {$C_p$ ($1 < p < \infty$)}.
\newblock {\em Comptes Rendus de l'Académie des Sciences, Paris, Série I, Mathématique}, 303(7):289--292, 1986.

\bibitem[LPP91]{LustPiquardPisier1991}
Françoise Lust-Piquard and Gilles Pisier.
\newblock Non commutative khintchine and paley inequalities.
\newblock {\em Arkiv för Matematik}, 29(1):241--260, 1991.

\bibitem[{McA}98]{mcallester1998}
David~A. {McAllester}.
\newblock Some pac-bayesian theorems.
\newblock In {\em Proceedings of the Eleventh Annual Conference on Computational Learning Theory}, COLT' 98, page 230–234, New York, NY, USA, 1998. Association for Computing Machinery.

\bibitem[{McA}99]{mcallester1999}
David~A. {McAllester}.
\newblock Pac-bayesian model averaging.
\newblock In {\em Proceedings of the Twelfth Annual Conference on Computational Learning Theory}, COLT '99, page 164–170, New York, NY, USA, 1999. Association for Computing Machinery.

\bibitem[Mou22]{mourtada2022exact}
Jaouad Mourtada.
\newblock Exact minimax risk for linear least squares, and the lower tail of sample covariance matrices.
\newblock {\em The Annals of Statistics}, 50(4):2157--2178, 2022.

\bibitem[Oli16]{oliveira2016lower}
Roberto Oliveira.
\newblock The lower tail of random quadratic forms with applications to ordinary least squares.
\newblock {\em Probability Theory and Related Fields}, 166(3-4):1175--1194, 2016.

\bibitem[SW97]{shawe-taylor1997}
John {Shawe-Taylor} and Robert~C. Williamson.
\newblock A pac analysis of a bayesian estimator.
\newblock In {\em Proceedings of the Tenth Annual Conference on Computational Learning Theory}, COLT '97, page 2–9, New York, NY, USA, 1997. Association for Computing Machinery.

\bibitem[Tal22]{talagrand2022}
Michel Talagrand.
\newblock {\em Upper and Lower Bounds for Stochastic Processes: Decomposition Theorems}, volume~60 of {\em Ergebnisse der Mathematik und ihrer Grenzgebiete. 3. Folge. A Series of Modern Surveys in Mathematics}.
\newblock Springer, 2022.

\bibitem[Tao12]{tao2012topics}
Terence Tao.
\newblock {\em Topics in Random Matrix Theory}, volume 132 of {\em Graduate Studies in Mathematics}.
\newblock American Mathematical Society, 2012.

\bibitem[Tro12]{tropp2012user}
Joel~A. Tropp.
\newblock User-friendly tail bounds for sums of random matrices.
\newblock {\em Foundations of Computational Mathematics}, 12(4):389--434, 2012.

\bibitem[Tro15]{tropp15}
Joel~A. Tropp.
\newblock An introduction to matrix concentration inequalities.
\newblock {\em Foundations and Trends in Machine Learning}, 8(1-2):1--230, 2015.

\bibitem[Val84]{valiant1984theory}
Leslie~G Valiant.
\newblock A theory of the learnable.
\newblock {\em Communications of the ACM}, 27(11):1134--1142, 1984.

\bibitem[VC64]{vapnik1964class}
Vladimir Vapnik and Alexey Chervonenkis.
\newblock A class of algorithms for pattern recognition learning.
\newblock {\em Avtomatika i Telemekhanika}, 25(6):937--945, 1964.

\bibitem[Ver16]{Vershynin2016HDP}
Roman Vershynin.
\newblock {\em High-Dimensional Probability: An Introduction with Applications}, volume~47 of {\em Cambridge Series in Statistical and Probabilistic Mathematics}.
\newblock Cambridge University Press, 2016.

\bibitem[Zhi24]{zhivotovskiy2024dimension}
Nikita Zhivotovskiy.
\newblock Dimension-free bounds for sums of independent matrices and simple tensors via the variational principle.
\newblock {\em Electronic Journal of Probability}, 29:1--28, 2024.

\end{thebibliography}
}

\appendix
\section{Proof of the PAC-Bayesian lemma}\label{app:pac_bayes_proof}
In this subsection we will prove \cref{lem:pacbayes}.
The proof is a direct consequence of the following result, known as the Donsker-Varadhan variational formula.
\begin{lemma}\label{lem:dv}
Fix a probability space $(\Theta,\pi)$.
For any measurable function $g$ such that $\E\limits_{\theta \sim \pi} \exp(g(\theta)) < \infty$ we have
\[
\log \left(\E_{\theta \sim \pi} \exp(g(\theta)) \right)= \sup_{\rho \ll \pi} \left\{\E_{\theta \sim \rho} g(\theta) - \KL(\rho \| \mu) \right\}.
\]
\end{lemma}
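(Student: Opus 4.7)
My plan is to prove the Donsker-Varadhan formula by the standard ``tilted measure'' argument, which identifies the optimal posterior explicitly as a Gibbs measure and then uses the non-negativity of the KL divergence.

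First, let $Z = \E_{\theta \sim \pi} \exp(g(\theta))$, which is finite by hypothesis, and define the tilted probability measure $\rho^\star$ on $\Theta$ via
\[
\frac{d\rho^\star}{d\pi}(\theta) = \frac{\exp(g(\theta))}{Z}.
\]
By construction $\rho^\star \ll \pi$, so $\rho^\star$ is a valid candidate in the supremum. The key identity is then to rewrite $g$ as $g(\theta) = \log Z + \log \frac{d\rho^\star}{d\pi}(\theta)$, which allows one to ``complete the square'' in KL.

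Next, for any $\rho \ll \pi$, I would substitute this decomposition of $g$ into the objective and manipulate densities:
\[
\E_{\theta \sim \rho} g(\theta) - \KL(\rho \| \pi) = \log Z + \E_{\theta \sim \rho}\!\left[\log \frac{d\rho^\star}{d\pi}(\theta) - \log \frac{d\rho}{d\pi}(\theta)\right] = \log Z - \KL(\rho \| \rho^\star),
\]
where in the last step I used the chain rule $\log\frac{d\rho^\star}{d\pi} - \log\frac{d\rho}{d\pi} = -\log\frac{d\rho}{d\rho^\star}$ (on the support of $\rho$, which is contained in that of $\rho^\star$ since $\rho^\star$ has strictly positive density with respect to $\pi$ on the support of any $\rho \ll \pi$).

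Finally, since $\KL(\rho \| \rho^\star) \ge 0$ with equality iff $\rho = \rho^\star$, taking the supremum over $\rho \ll \pi$ on the left-hand side gives exactly $\log Z$, attained by $\rho = \rho^\star$. This yields the claimed identity. The only mild subtlety to check is the absolute-continuity condition on $\rho^\star$: if $\exp(g)$ vanishes on a $\pi$-null set we are fine, and otherwise $\rho^\star$ has strictly positive density so any $\rho \ll \pi$ also satisfies $\rho \ll \rho^\star$, making $\KL(\rho \| \rho^\star)$ well-defined. I do not anticipate a real obstacle here; the proof is essentially a one-line computation once the Gibbs measure is introduced.
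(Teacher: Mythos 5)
Your proof is correct and is essentially identical to the paper's: the tilted measure $\rho^\star$ you construct is exactly the paper's $\pi_g$, and both arguments reduce the objective to $\log Z - \KL(\rho\|\rho^\star)$ and conclude via non-negativity of KL with equality at the Gibbs measure. No issues.
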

\begin{proof}
Let $\pi_g$ be the probability measure with density $\frac{\mathrm{d} \pi_g}{\mathrm{d} \pi}(\theta) = \frac{\exp(g(\theta))}{\E_{\theta \sim \pi} \exp(g(\theta))}.$
For any $\rho \ll \pi$ we have
\[
\KL(\rho \| \pi_g) = \int_{\Theta} \log \left(\frac{\mathrm{d} \rho}{\mathrm{d} \pi_g}\right)\mathrm{d}\rho = \KL(\rho \| \pi) +\log \left(\E_{\theta \sim \pi_g} \exp(g(\theta))\right) - \E_{\theta \sim \rho} g(\theta).
\]
Using the non-negativity of the KL divergence yields
\begin{equation}\label{eq:donsker_kl_lb}
\E_{\theta \sim \rho} g(\theta)-\KL(\rho \| \pi) \le \log \left(\E_{\theta \sim \pi} \exp(g(\theta))\right).
\end{equation}
Noticing that $\rho \ll \pi$ was chosen arbitrarily together with the fact that \cref{eq:donsker_kl_lb} is an equality when $\rho = \pi_g$ yields the claim.
\end{proof}
We are now ready to prove the PAC-Bayesian lemma.
Define the function $g : \mc{Z}^n \times \theta \to \mathbb{R}$ as 
\[g(Z_1, \dots, Z_n, \theta) = \sum_{k=1}^n \left(f_k(Z_k, \theta) -\log\left(\E_{Z_k} \exp(f(Z_k, \theta))\right)\right).
\]
Using \cref{lem:dv}(conditioned on $Z_1, \dots, Z_n$), Jensen's inequality, and the independence of $Z_1, \dots, Z_n$ yields
\begin{align*}
\E_{Z_1, \dots, Z_n}\left[\sup_{\rho \ll \pi}\E_{\theta \sim \rho }g(Z_1, \dots, Z_n, \theta) - \KL(\rho \| \pi)\right] &\le   \log \left(\E_{Z_1, \dots, Z_n}\E_{\theta \sim \pi} \exp(g(Z_1, \dots, Z_n, \theta)) \right)\\
&= \log \left(\E_{\theta \sim \pi} \E_{Z_1, \dots, Z_n}\exp\left(\sum_{k=1}^n f_k(Z_k, \theta) - \log \E_{Z_k}\exp(f_k(Z_k, \theta))\right) \right)\\
&= \log \left(\E_{\theta \sim \pi}\frac{\E_{Z_1, \dots, Z_n}\exp\left(\sum_{k=1}^n f_k(Z_k, \theta)\right)}{\prod_{k=1}^n \E_{Z_k} \exp(f_k (Z_k, \theta))} \right) = \log(1) = 0,
\end{align*}
as desired.
\section[Proof of]{Proof of \cref{prop:optimization}}\label{app:optimization}
We begin with the lower bound.
Using the trivial inequality to lower bound the infimum gives us
\begin{align*}
\inf_{x > 0 } \left\{ a_0 x + \sum_{\ell=2}^{r}  a_\ell x^{1-\ell} \right\} \ge \sum_{\ell=2}^{r} \inf_{x_\ell > 0 } \left\{ \frac{a_0 x_\ell}{r-1} +  a_\ell x_\ell^{1-\ell} \right\} \ge 2\sum_{\ell=2}^r (r-1)^{\frac{1-\ell}{\ell}}a_0^{\frac{\ell-1}{\ell}} a_\ell^{\frac{1}{\ell}} \ge 
2(r-1)^{\frac{1-r}{r}}\max_{2 \le \ell \le r} a_0^{\frac{\ell-1}{\ell}} a_\ell^{\frac{1}{\ell}},
\end{align*}
where the second inequality follows from an explicit optimization and the final inequality follows from the fact that the function $\ell \to (r-1)^{\frac{1-\ell}{\ell}}$ is non-increasing for $\ell, r \ge 2$.

To prove the upper bound, we will relax our optimization problem.
We can upper bound our original optimization problem as follows:
\begin{equation}\label{eq:relaxation}
\inf_{x > 0 } a_0 x + \sum_{\ell=2}^{r}  a_\ell x^{1-\ell}  \le \inf_{x > 0 }  \max_{\ell \in \{0, 2, \dots, r\}} r a_\ell x^{1-\ell}.
\end{equation}
It will be convenient to apply a change of variables.
Let $y = \ln(x)$ and $b_\ell = \ln(a_\ell)$ for $\ell \in I$.
Define the functions $f(y) = b_0 + y$ and $g(y) = \max\limits_{ \ell \in I} b_\ell + (1-\ell)y$.
We can re-write the right-hand side of the above as
\begin{equation}\label{eq:relaxation_v2}
\inf_{x > 0 }  \max_{\ell \in \{0 , 2, \dots, r\}} r a_\ell x^{1-\ell} = \inf_{y \in \bR } \max_{\ell \in \{0 , 2, \dots, r\}} r\exp(b_\ell + (1-\ell)y) =  \inf_{y \in \bR} r \exp
\left( \max\left\{f(y), g(y) \right\}\right).
\end{equation}
We will prove that there exists a $y^\star \in \bR$ such that $g(y^\star) = f(y^\star)$, i.e.\ 
\begin{equation}\label{eq:optimal_y}
b_0 + y^\star = \max_{2 \le \ell \le r} b_\ell + (1-\ell)y^\star.
\end{equation}
Before we prove this, we will show that such a $y^\star$ suffices to prove our desired upper bound.
Let $\ell^\star \in \{2, \dots, r\}$ be any index achieving the maximum on the right-hand side of \cref{eq:optimal_y}. 
Solving for $y^\star$ in terms of $b_0, \ell^\star$ and $b_{\ell^\star}$ and plugging this back into \cref{eq:relaxation_v2} yields
\[
\inf_{y \in \bR} r \exp
\left(\max\left\{ f(y) , g(y)\right\}\right) \le r \exp
\left(\max\left\{f(y^\star), g(y^\star) \right\}\right) = r\exp\left(\left(\frac{\ell^\star-1}{\ell^\star}\right)b_0 + \frac{1}{\ell^\star}b_{\ell^\star}\right) .
\]
Plugging the above back into our original optimization problem via \cref{eq:relaxation} yields
\[
\inf_{x > 0 } a_0 x + \sum_{\ell=2}^{r}  a_\ell x^{1-\ell}  \le r\exp\left(\left(\frac{\ell^\star-1}{\ell^\star}\right)b_0 + \frac{1}{\ell^\star}b_{\ell^\star}\right)  = r a_{0}^{\frac{\ell^\star -1}{\ell^\star}}a_{\ell^\star}^{\frac{1}{\ell^\star}} \le r \max_{2 \le \ell \le r} a_{0}^{\frac{\ell -1 }{\ell}}a_{\ell}^{\frac{1}{\ell}},
\]
as claimed.

We will now prove that such a $y^\star \in \bR$ exists.
Notice that the functions $f(y)$ and $g(y)$ are continuous, so the function $h(y) = f(y) - g(y) $ is continuous.
Furthermore, for a large enough $y_1$, we have $f(y_1) > g(y_1)$ which implies $h(y_1) > 0$. 
For a small enough $y_2$, we have $f(y_2) < g(y_2)$ which implies $h(y_2) < 0$.
The intermediate value theorem implies that a $y^\star$ satisfying $h(y^\star) = 0$ exists.
This completes the proof.
\end{document}